\documentclass[twocolumn]{autart}%
\usepackage{bbm}
\usepackage{amssymb}
\usepackage{amsmath}
\usepackage{amssymb}
\usepackage{mathrsfs}
\usepackage{amsfonts}
\usepackage{graphicx}
\usepackage{color}
\usepackage{graphicx}
\usepackage{wrapfig}
\usepackage{epstopdf}%
\providecommand{\U}[1]{\protect \rule{.1in}{.1in}}
\allowdisplaybreaks[4]
\newenvironment{proof}[1][Proof]{\noindent \textbf{#1.} }{\  \rule{0.5em}{0.5em}}
\newtheorem{remark}{Remark}
\newtheorem{theorem}{Theorem}
\newtheorem{lemma}{Lemma}

\newtheorem{definition}{Definition}
\newtheorem{corollary}{Corollary}

\newtheorem{assumption}{Assumption}
\begin{document}
\begin{frontmatter}
\title{Using artificial delays for prescribed/fixed-time static output feedback control of linear systems \thanksref{footnoteinfo}}
\vspace{-0.8cm}
\thanks[footnoteinfo]{This work was supported in part by Azrieli International Postdoctoral Fellowship, in part by the Israel Science Foundation
under Grant 446/24, and in part by the Chana and Heinrich Manderman
Chair at Tel Aviv University. \\$^{\ast}$ Corresponding author.}
\author{Kang-Kang Zhang}\ead{kangkang\_kkz@163.com}, \quad
\author{Emilia Fridman} $^{\ast}$\ead{emilia@tauex.tau.ac.il}\quad
\address{School of Electrical and Computer Engineering, Tel Aviv University, 6997801, Israel}
\vspace{-0.8cm}
\begin{abstract}
In this paper, we address the prescribed/fixed-time control problem for linear time-invariant systems with relative degree $r\geq 2$ using delayed time-varying static output feedback. In addition to controllability and observability, we assume that a truncated observability matrix is nonsingular.
Within the act-and-wait control framework, two types of artificial delays are introduced into the output feedback: one to realize static output feedback and the other to achieve prescribed/fixed-time control. By constructing smooth periodic delayed output feedback controllers, we achieve prescribed-time input-to-state stability (ISS) with respect to matched disturbances and measurement noise. Next, we consider systems with measurement delays. Note that existing predictor methods are not applicable to static output feedback delay compensation. We propose delayed controllers that achieve fixed-time stability in the presence of large known slowly-varying delays (with delay derivatives less than one) and even in the presence of large unknown constant delays for systems with nilpotent matrices. Moreover, we present a design for the case of discrete-time periodic measurements with a small sampling period. When the system reduces to a chain of integrators, prescribed/fixed-time convergence can be achieved with arbitrarily large artificial delays introduced by static output feedback avoiding high gain and managing with large known slowly-varying or sawtooth delays (corresponding to discrete-time measurements) or unknown constant delays. In the case of unknown constant delays, this leads to a low-gain controller that may efficiently attenuate measurement noise. Numerical examples verify the effectiveness of the proposed method.
\end{abstract}
\begin{keyword}
Artificial delay;
Prescribed-time ISS;
Fixed-time stabilization;
Linear systems;
Static output feedback.
\end{keyword}
\end{frontmatter}
\vspace{-0.8cm}

\section{Introduction}

Since the derivative of the output can be approximated by finite differences,
i.e., $\dot{y}(t)\approx(y(t)-y(t-d))/d$, static output feedback with delays
can stabilize linear time-invariant systems that can be stabilized by a
controller depending on the derivative of the output, which cannot be measured
directly \cite{Boussaada26icsl,emiliabook,emilia16auto,emilia25,Gomez19auto,kara08scl,nicule04tac,ramirez2015design}. In
the existing works it is proved that the resulting delayed static
output-feedback stabilizes the system for small enough delay $d$. By using the
frequency-domain method, the output feedback stabilization problem for a chain
of integrators with multiple delays was solved in \cite{nicule04tac}. By employing
Taylor's expansion of the delayed terms with the remainders in the integral
form and constructing appropriate Lyapunov--Krasovskii functionals, simple and
efficient LMIs for stability of the static output feedback with delays were
obtained in \cite{emilia17auto}. Further, improved reduced-order LMIs were presented for linear systems that
can be stabilized by derivative-dependent feedback
\cite{anton18auto}. However, the above results can only achieve exponential stability, which may not necessarily meet the requirements of
practical engineering for faster convergence rates.

Compared with asymptotic stability or exponential stability,
finite/fixed/prescribed-time control has long been a research
hotspot in the control community due to its faster convergence rate
\cite{bhat00siam,mazenc25tac,ning23auto,polyakov11tac,song17auto,ye25auto}. For the chain of integrators,
accelerated or practical fixed-time input-to-state
stability (ISS) using static homogeneous output feedback was studied in
\cite{Efimov20auto,Nekhoroshikh22auto}, where ISS characterizes the system's robustness
against external inputs and convergence to a small residual set, and the term
``practical'' means that the state converges to a small neighborhood of the
origin rather than to the origin itself, even in the absence of external
inputs.

%By periodically activating and deactivating delayed feedback, the
%act-and-wait control strategy can be used to achieve finite-time convergence
%\cite{Insperger06tcst,kara06siam,Stepan06arc}.
%The act-and-wait control strategy, which periodically activates and
%deactivates delayed feedback, was formulated and systematically investigated
%in \cite{Insperger06tcst,Stepan06arc}.
For stabilization, the act-and-wait control strategy, which periodically activates and
deactivates delayed feedback, was proposed and investigated in
\cite{Insperger06tcst,Stepan06arc}. More recently, building on the
same principle, a smooth periodic delayed-feedback control method was
developed for general controllable linear systems with a known input delay,
where the convergence time is determined by an artificial delay
\cite{zhou22tac}.
In the latter paper, it was demonstrated that in the presence of an input delay
$\tau>0$, the convergence time $T$ cannot be less than $2\tau$ (i.e.,
fixed-time control), while in the absence of such a delay, the convergence time
$T$ can be arbitrarily prescribed (i.e., prescribed-time control). Subsequently, by employing the act-and-wait strategy,
fixed-time stabilization by state feedback was achieved in
\cite{zhang26tac} for systems with nilpotent matrices in the
presence of unknown constant measurement delays. In this paper, we refer to fixed-time stability (respectively, fixed-time ISS) whose
convergence time $T>0$ can be arbitrarily prescribed as prescribed-time
stability (respectively, prescribed-time ISS).

In the present paper, we address prescribed/fixed-time control of linear systems
by introducing artificial delays into time-varying static output feedback
within an act-and-wait framework. To the best of our knowledge, this provides
the first static output-feedback solution to prescribed/fixed-time stabilization
of linear systems. In contrast, the existing static output-feedback methods
mainly establish asymptotic, exponential, accelerated, or practical
fixed-time stability
\cite{Efimov20auto,emilia17auto,Nekhoroshikh22auto,anton18auto}.
For linear time-invariant systems, the proposed controller further guarantees
prescribed-time input-to-state stability with respect to matched disturbances
and measurement noise. Rather than imposing the sufficiently small delay restrictions arising from
Lyapunov--Krasovskii-functional-based LMI conditions
\cite{Efimov20auto,emilia17auto,Nekhoroshikh22auto,anton18auto}, the admissibility of the artificial delays is characterized by
an algebraic invertibility condition. This condition is naturally satisfied for all sufficiently small delays,
and is therefore consistent with the existing small-delay results. More importantly, the artificial delays are not required to remain small
and only need to avoid the values at which the corresponding matrix becomes
singular.

The existing static output-feedback methods considered in
\cite{Efimov20auto,emilia16auto,emilia17auto,Nekhoroshikh22auto,anton18auto}
are developed for delay-free output measurements or small ones. To address this
limitation, the proposed framework is extended to systems either with large known slowly-varying output delays (with delay derivatives less than one), or with small sawtooth output delays corresponding to discrete-time output measurements, or with large unknown constant output delays when the system matrix is
nilpotent. In particular, for chains of integrators, prescribed/fixed-time convergence
can be achieved with arbitrarily large artificial delays avoiding high gain and managing with large known slowly-varying or sawtooth delays (corresponding to discrete-time measurements) or unknown constant delays, thereby removing
the small-delay restrictions in \cite{Efimov20auto,emilia17auto,Nekhoroshikh22auto,anton18auto}.
In the case of unknown constant delays, this leads to a low-gain controller that may efficiently attenuate measurement noise. In addition, unlike \cite{nicule04tac}, which stabilizes chains of integrators using delayed feedback with a slow convergence rate, the proposed method achieves rapid prescribed-time
convergence.

The remainder of this paper is organized as follows. For linear
time-invariant systems without measurement delay, the prescribed-time ISS
results with respect to matched disturbances and measurement noise are studied
in Section~\ref{sec2}. For systems with measurement delay, fixed-time results
are proposed in Section~\ref{sec3}. Some numerical examples are given in
Section~\ref{sec4} to illustrate the proposed theory, and Section~\ref{sec5}
concludes this paper.

\textbf{Notation} $\mathbb{Z}_{0}^{+}$ denotes the set of nonnegative integers, and
$\mathbb{N}$ denotes the set of positive integers.
$|\cdot|$ denotes the usual Euclidean norm on
$\mathbb{R}^{n}$ and the corresponding induced matrix norm.
$\lfloor a\rfloor$ denotes the integer part of $a$.
$\mathbf{C}^{i}$ denotes the class of $i$ times continuously
differentiable functions. For a matrix $A$,
$\sigma_{\min}(A)$ denotes its smallest singular value. For $i=0,1,\ldots,r-1$, $e_{p,i}\in\mathbb{R}^{rp\times p}$
denotes the block selection matrix whose $(i+1)$-th block is
$I_p$ and whose remaining blocks are zero.

\section{\label{sec2}Prescribed-Time Static Output Feedback ISS}

\subsection{Systems with an Invertible Truncated Observability Matrix}

Consider the linear system with matched disturbances
\begin{equation}
\left\{
\begin{array}
[c]{rl}%
\dot{x}(t) & =Ax(t)+B(u(t)+w_{1}(t)),\\
y(t) & =Cx(t)+w_{2}(t),
\end{array}
\right.  \label{sys_2}%
\end{equation}
where $x(t) \in%
%TCIMACRO{\U{211d} }%
%BeginExpansion
\mathbb{R}
%EndExpansion
^{n}$, $u(t) \in%
%TCIMACRO{\U{211d} }%
%BeginExpansion
\mathbb{R}
%EndExpansion
^{m}$, $y(t) \in%
%TCIMACRO{\U{211d} }%
%BeginExpansion
\mathbb{R}
%EndExpansion
^{p}$, and relative degree $r\geq2$, i.e.,
\begin{equation}
CA^{i}B=0,\quad i=0,1,\ldots,r-2,\quad CA^{r-1}B\neq0, \label{cab}%
\end{equation}
$w_{1}(t)\in\mathbb R^m$ and $w_{2}(t)\in\mathbb R^p$ are piecewise continuous disturbances satisfying
$\vert w_{1}(t)\vert\leq w_{1\infty}$ and $\vert w_{2}(t)\vert\leq w_{2\infty
}$ with $w_{1\infty}$ and $w_{2\infty}$ being two nonnegative constants.

\begin{assumption}
\label{ass1}Assume that $n=pr$ and $(A,B)$ is controllable.
\end{assumption}

Denote by $y_{0}(t)=Cx(t)$ the nominal (non-disturbed) output. Clearly, in
view of (\ref{sys_2}) and (\ref{cab}), we can calculate
\begin{equation}
y_{0}^{\left(  i\right)  }\left(  t\right)  =CA^{i}x\left(  t\right)  ,\quad
i=1,\ldots,r-1. \label{y_norm_def}%
\end{equation}
Then the non-disturbed system (\ref{sys_2}) with $w_{1}\left(  t\right)  =0$
can be stabilized by controller
\begin{equation}
u_{0}(t)=\sum_{i=0}^{r-1}\bar{K}_{i}y_{0}^{(i)}(t), \label{u_normal}%
\end{equation}
where $\bar{K}_{i}\in\mathbf{%
%TCIMACRO{\U{211d} }%
%BeginExpansion
\mathbb{R}
%EndExpansion
}^{m\times p}$ is the constant controller gain. The derivatives of $y_{0}(t)$
can be approximated by finite-differences $\tilde{y}_{i}(t) \approx
y_{0}^{(i)}(t)$:
\begin{align}
\tilde{y}_{0}(t)  &  =y_{0}(t),\label{y_0_hat}\\
\tilde{y}_{i}(t)  &  =\frac{\tilde{y}_{i-1}(t)-\tilde{y}_{i-1}(t-d)}%
{d}\nonumber\\
&  =\frac{1}{d^{i}}\sum_{k=0}^{i}%
\begin{pmatrix}
i\\
k
\end{pmatrix}
(-1)^{k}y_{0}(t-kd), \label{y_i_hat}%
\end{align}
with $i=1,2,\ldots,r-1$ and a delay $d>0$. Therefore, motivated by the approximation
$\tilde{y}_{i}(t)\approx y_{0}^{(i)}(t)$, the derivative-dependent
controller (\ref{u_normal}) can be implemented in the following
delay-dependent form:
\begin{equation}
u_{0}(t)=\sum_{i=0}^{r-1}\bar{K}_{i}\tilde{y}_{i}(t)
=\sum_{i=0}^{r-1}K_{i}y_{0}(t-id), \label{u_normal2}
\end{equation}
where we set $y_{0}(t)=y_{0}(0)$ for $t<0$, $K_i\in\mathbb{R}^{m\times p}$, $i=0,1,\ldots,r-1$, are the
block columns of $K=[K_{0},K_{1},\ldots,K_{r-1}]\in\mathbb{R}^{m\times rp}$.
More precisely, let $\bar{K}=[\bar{K}_{0},\bar{K}_{1},\ldots,\bar{K}_{r-1}]
\in\mathbb{R}^{m\times rp}$.
Then $K=\bar{K}M^{\mathrm{T}}$
where $M\in\mathbb{R}^{rp\times rp}$ is the upper triangular block matrix
with block entries
\begin{equation}
M_{ij}=\left\{
\begin{array}{ll}
(-1)^{i}\binom{j}{i}\dfrac{1}{d^{j}}I_{p}, & j\geq i,\\[1.5mm]
0_{p}, & j<i,
\end{array}
\right.
\quad i,j=0,1,\ldots,r-1. \label{Mij}
\end{equation}
If the nominal system (\ref{sys_2}) can be stabilized by the
derivative-dependent control (\ref{u_normal}), then it can be stabilized by
the delay-dependent control (\ref{u_normal2}) with small enough delays
(\cite{Fre09siam,anton18auto}). The following lemma will be used in the analysis.

\begin{lemma}
\cite{anton18auto} If $y_{0}(t)\in\mathbf{C}^{i}$ and $y_{0}^{(i)}(t)$ is
absolutely continuous with $i\in%
%TCIMACRO{\U{2115} }%
%BeginExpansion
\mathbb{N}
%EndExpansion
$, then $\tilde{y}_{i}(t)$ defined in (\ref{y_i_hat}) satisfies
\begin{equation}
\tilde{y}_{i}(t) = y_{0}^{(i)}(t) -\int_{t-id}^{t}\varphi_{i}(t-s)
y_{0}^{(i+1)}(s) \mathrm{d}s, \label{y_h}%
\end{equation}
where $\varphi_{1}(\xi)=\frac{d-\xi}{d}$ and for $i\in%
%TCIMACRO{\U{2115} }%
%BeginExpansion
\mathbb{N}
%EndExpansion
,$
\[
\varphi_{i+1}(\xi)=\left\{
\begin{array}
[c]{ll}%
\int_{0}^{\xi}\frac{\varphi_{i}(\lambda)}{d}\mathrm{d}\lambda+\frac{d-\xi}%
{d}, & \xi\in\lbrack0,d],\\
\int_{\xi-d}^{\xi}\frac{\varphi_{i}(\lambda)}{d}\mathrm{d}\lambda, & \xi
\in(d,id),\\
\int_{\xi-d}^{id}\frac{\varphi_{i}(\lambda)}{d}\mathrm{d}\lambda, & \xi
\in\lbrack id,id+d].
\end{array}
\right.
\]

\end{lemma}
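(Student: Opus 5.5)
We argue by induction on $i$, using the recursion $\tilde{y}_{i}(t)=(\tilde{y}_{i-1}(t)-\tilde{y}_{i-1}(t-d))/d$ from (\ref{y_i_hat}) and the Newton--Leibniz formula. The kernel $\varphi_i$ is supported on $[0,id]$, so we set $\varphi_i(\xi)=0$ outside this interval when convenient.

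\textbf{Base case $i=1$.} Integrating by parts,
\[
\int_{t-d}^{t}\frac{d-(t-s)}{d}\,\dot y_0(s)\,\mathrm{d}s=y_0(t)-\frac{1}{d}\int_{t-d}^{t}y_0(s)\,\mathrm{d}s .
\]
On the other hand, $\tilde y_1(t)-\dot y_0(t)=\frac1d\int_{t-d}^{t}(\dot y_0(s)-\dot y_0(t))\,\mathrm{d}s$. We rewrite this directly as $-\int_{t-d}^{t}\frac{s-(t-d)}{d}\,\ddot y_0(s)\,\mathrm{d}s$, again by integration by parts, using absolute continuity of $\dot y_0$. The weight is $\frac{s-t+d}{d}=\frac{d-(t-s)}{d}=\varphi_1(t-s)$, which gives (\ref{y_h}) for $i=1$.

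\textbf{Inductive step.} Assume (\ref{y_h}) holds for $i$, and apply it to $y_0$ under the $\mathbf{C}^{i+1}$ hypothesis. Then
\[
\tilde y_{i+1}(t)=\frac{y_0^{(i)}(t)-y_0^{(i)}(t-d)}{d}-\frac1d\Big[\int_{t-id}^{t}\varphi_i(t-s)y_0^{(i+1)}(s)\,\mathrm{d}s-\int_{t-d-id}^{t-d}\varphi_i(t-d-s)y_0^{(i+1)}(s)\,\mathrm{d}s\Big].
\]
The first quotient equals $y_0^{(i+1)}(t)-\int_{t-d}^{t}\varphi_1(t-s)y_0^{(i+2)}(s)\,\mathrm{d}s$ by the base case applied to $y_0^{(i)}$. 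For the bracket, we set $g(s)=y_0^{(i+1)}(s)$ and write $g(s)=g(t)-\int_s^t g'(\sigma)\,\mathrm{d}\sigma$ in both integrals. The constant parts cancel because $\int_0^{id}\varphi_i=\int_0^{id}\varphi_i$; both integrals equal the same total mass. Using Fubini, the remaining terms become $\frac1d\int_{t-(i+1)d}^{t}g'(\sigma)\,\Psi(t-\sigma)\,\mathrm{d}\sigma$ with
\[
\Psi(\xi)=\int_{0}^{\xi}\varphi_i(\lambda)\,\mathrm{d}\lambda-\int_{0}^{\xi-d}\varphi_i(\lambda)\,\mathrm{d}\lambda=\int_{\xi-d}^{\xi}\varphi_i(\lambda)\,\mathrm{d}\lambda ,
\]
where $\varphi_i$ is truncated to $[0,id]$.

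\textbf{Combining the terms.} Adding the $\varphi_1$ term, which lives on $\xi\in[0,d]$, yields the kernel $\varphi_{i+1}$ on three pieces:
\begin{itemize}
\item on $[0,d]$: $\int_0^\xi\varphi_i/d+(d-\xi)/d$;
\item on $(d,id)$: $\int_{\xi-d}^{\xi}\varphi_i/d$;
\item on $[id,(i+1)d]$: $\int_{\xi-d}^{id}\varphi_i/d$, because of the truncation.
\end{itemize}
This matches the stated formula, and the integration range $[t-(i+1)d,t]$ is as required.

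\textbf{Main obstacle.} The delicate step is the Fubini/truncation bookkeeping. One has to check the signs and that the lower limit $\max(0,\xi-d)$ produces exactly the three cases. It also requires verifying that $\int_0^{id}\varphi_i$ cancels, which we do by writing both bracket integrals relative to the common value $g(t)$ rather than computing the mass explicitly. The regularity assumptions justify each integration by parts.
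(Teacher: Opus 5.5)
Your argument is correct and complete. The paper gives no proof of its own here; the lemma is quoted from \cite{anton18auto}, so there is no in-paper argument to compare against.

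The individual steps check out. In the base case, $\frac1d\int_{t-d}^{t}(\dot y_0(s)-\dot y_0(t))\,\mathrm{d}s=-\int_{t-d}^{t}\varphi_1(t-s)\ddot y_0(s)\,\mathrm{d}s$ holds exactly. In the inductive step, the two windows $[t-id,t]$ and $[t-(i+1)d,t-d]$ carry the same constant $g(t)\int_0^{id}\varphi_i$, so it cancels. Fubini is then legitimate, since $\varphi_i$ is bounded and $g'=y_0^{(i+2)}$ is locally integrable. It gives the bracket as $+\int_{t-(i+1)d}^{t}g'(\sigma)\Psi(t-\sigma)\,\mathrm{d}\sigma$, which enters $\tilde y_{i+1}$ with the factor $-\frac1d$. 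Adding the $\varphi_1$ term reproduces the stated three-piece kernel. This includes the degenerate case $i=1$, where $(d,id)$ is empty and the first and third formulas agree at $\xi=d$.

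Two cosmetic remarks. The first display of your base case is never used. You should also write the sign of the bracket's contribution explicitly rather than saying the terms ``become'' $\frac1d\int g'\Psi$.

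You can avoid the constant-cancellation bookkeeping altogether. Use $\tilde y_{i+1}[y_0](t)=\frac1d\int_{t-d}^{t}\tilde y_i[\dot y_0](s)\,\mathrm{d}s$, which follows from $y_0(t)-y_0(t-d)=\int_{t-d}^{t}\dot y_0(s)\,\mathrm{d}s$ and linearity of the finite differences. Then apply the induction hypothesis to $\dot y_0$ and swap the integrals once. The kernel comes out directly as the $\varphi_1$ term on $[0,d]$ plus $\frac1d\int_{\max(\xi-d,0)}^{\min(\xi,id)}\varphi_i(\lambda)\,\mathrm{d}\lambda$.
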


We make the following assumption.

\begin{assumption}
\label{ass2}%
Assume that
\begin{equation}
W_{\mathrm{o}}=[C^{\mathrm{T}},(CA)^{\mathrm{T}},\ldots,(CA^{r-1}%
)^{\mathrm{T}}]^{\mathrm{T}}\in\mathbf{%
%TCIMACRO{\U{211d} }%
%BeginExpansion
\mathbb{R}
%EndExpansion
}^{n\times n}, \label{Woo}%
\end{equation}
is invertible.
\end{assumption}
\begin{remark}
If $W_{\mathrm{o}}$ is invertible, then necessarily $n=pr$, and
$
[y_0^{\mathrm T},\dot y_0^{\mathrm T},\ldots,
(y_0^{(r-1)})^{\mathrm T}]^{\mathrm T}
=W_{\mathrm{o}}x
$
defines a nonsingular transformation of the state. Hence, the state $x(t)$ can be
uniquely reconstructed from the output $y_0(t)$ and its derivatives up to order
$r-1$. In this sense, with respect to the chosen output $y(t)=Cx(t)$, the system has
no internal dynamics in the input-output normal form; see Section 2.1 of
\cite{zhou23auto}. This condition plays an important role in the proposed delayed static
output-feedback fixed/prescribed-time control design. Indeed, if internal
dynamics exist, then the output and its finite-order derivatives may not
directly provide a complete characterization of the whole state, and the
present static output-feedback construction may not, in general, directly
ensure that all state components converge to zero within a
finite/fixed/prescribed time. If $W_{\mathrm{o}}$ is not invertible, one
possible approach is to redefine or augment the measured output, when
feasible, so that the corresponding matrix $W_{\mathrm{o}}$ associated with
the new output $\tilde y(t)=\tilde Cx(t)$ becomes invertible.
\end{remark}
Let $h>0$ be an artificial delay. In this paper, following \cite{zhou22tac},
where full state-feedback was considered, to achieve
prescribed/fixed-time control, we design time-varying gains, say,
$K=K(t)$ and $\bar{K}=\bar{K}(t)$.

\begin{definition}
\label{def1}$S(t):\mathbb{R}_{\geq0}\rightarrow\mathbb{R}$ is a smooth function satisfying

\begin{itemize}
\item $S(t)$ is smooth and $3h$-periodic;

\item $S(t)\geq0$, for all $ t\in\lbrack2h,3h]$, and there exists an $h_{\ast}%
\in(2h,3h)$ such that $S(h_{\ast})>0$;

\item $S(2h)=0$ and $S(3h)=0$.
\end{itemize}
\end{definition}

For $l\in\mathbb{Z}_{0}^{+}$, define
\begin{equation}
g(t)= \left\{
\begin{array}
[c]{cl}%
0, & t\in[3lh,(3l+2)h],\\
1, & t\in((3l+2)h,3(l+1)h).
\end{array}
\right. \label{g_t}%
\end{equation}
Let
\begin{equation}
\bar{K}(t) = -S(t)B^{\mathrm{T}}\mathrm{e}^{A^{\mathrm{T}}(3lh-t)}
W_{\mathrm{c}}\mathrm{e}^{-A(t-h-3lh)} (W_{\mathrm{o}}+L(d))^{-1}%
,\label{K_hat}%
\end{equation}
where \begin{equation}
\bar{K}(t)= [\bar{K}_{0}(t),\bar{K}_{1}(t),\ldots,\bar{K}_{r-1}(t)]
\in\mathbb{R}^{m\times n},
\label{Kbar_block_def}
\end{equation} $W_{\mathrm{o}}$ is defined in (\ref{Woo}), and
\begin{equation}
W_{\mathrm{c}} = \left(  \int_{2h}^{3h} \mathrm{e}^{-As}BS(s)B^{\mathrm{T}}
\mathrm{e}^{-A^{\mathrm{T}}s}\mathrm{d}s \right) ^{-1}.\label{W_c_def}%
\end{equation}
Moreover,
\begin{equation}
L(d)= [0_{p\times n}^{\mathrm{T}}, L_{1}^{\mathrm{T}}(d),\ldots,
L_{r-1}^{\mathrm{T}}(d)]^{\mathrm{T}} \in\mathbb{R}^{n\times n},
\label{L_h_whole_def}
\end{equation}
where for $i=0,1,\ldots,r-1$,
\begin{equation}
L_{i}(d) = CA^{i+1}\int_{id}^{0}\varphi_{i}(u) \mathrm{e}^{-Au}\mathrm{d}%
u,\label{L_h_def}%
\end{equation}
with $\varphi_{0}\equiv1$.

We first prove that $W_{\mathrm{c}}$ is invertible and that there exists a
sufficiently small positive constant $d_{\ast}$ such that for $d\in(0,d_{\ast
})$, $W_{\mathrm{o}}+L(d)$ is invertible. Regarding $W_{\mathrm{c}}$, the
controllability of $(A,B)$ together with Theorem 1 in \cite{zhou22tac} implies
that $W_{\mathrm{c}}$ is invertible. Moreover, since
\begin{align}
&  \lim_{d\rightarrow0^{+}}\int_{id}^{0}\varphi_{i}(u)
\mathrm{e}^{-Au}\mathrm{d}u=0, \label{hhh}%
\end{align}
there exists a sufficiently small constant $d_{\ast}$ such that for
$d\in(0,d_{\ast})$, $I_{n}+W_{\mathrm{o}}^{-1}L(d)$ is invertible. Hence, for
$d\in(0,d_{\ast})$, the matrix $W_{\mathrm{o}}+L(d)=W_{\mathrm{o}}%
(I_{n}+W_{\mathrm{o}}^{-1}L(d))$ is invertible.

\begin{remark}
It is worth noting that the smallness condition on $d$ is only a convenient
sufficient condition for ensuring the invertibility of $W_{\mathrm{o}}+L(d)$.
Indeed, for a given system, any $d>0$ satisfying $\det(W_{\mathrm{o}%
}+L(d))\neq0$ is admissible. Thus, our method does not essentially require $d$
to be sufficiently small; it only requires $d$ to avoid the singular values at
which $W_{\mathrm{o}}+L(d)$ becomes singular. The same convention will be used
in the sequel: whenever a smallness condition on $d$ is introduced to
guarantee invertibility, it should be understood as a convenient sufficient
condition, while any $d$ preserving the required nonsingularity is also admissible.
\end{remark}
By construction, $\bar K(t)$ and $K(t)$ are $3h$-periodic.
Hence, the quantities defined below are independent of
$l\in\mathbb Z_0^+$.
Denote
\[
\alpha=\max\{\alpha_{1},\alpha_{2}\},
\]
in which
\begin{align}
\alpha_1
&= \begin{array}{ll}
	\mathrm{e}^{3|A|h} \int_{2h}^{3h} |\mathrm{e}^{-Av}| \, |B|
	\sum_{i=0}^{r-1} |\bar{K}_{i}(v+3lh)|
\end{array} \nonumber \\
&\quad \begin{array}{ll}
	\times |(CA^{i}+L_{i}(d))\mathrm{e}^{A(v-h)}| \, \mathrm{d}v
	\, (\delta_{1\infty}+\delta_{2\infty})
\end{array} \nonumber \\
&\quad + \begin{array}{ll}
	(\mathrm{e}^{3|A|h}+1)(\delta_{1\infty}+\delta_{2\infty}),
\end{array} \label{alpha_1} \\
\alpha_2
&= \begin{array}{ll}
	\mathrm{e}^{3|A|h}(\delta_{1\infty}+\delta_{2\infty})
	+ \mathrm{e}^{3|A|h} \int_{0}^{3h} \mathrm{e}^{|A|s} |B| \, \mathrm{d}s,
\end{array} \label{alpha_2}
\end{align}
with%
%\begin{align}
%\delta_{1\infty}  &  =\mathrm{e}^{3\left\vert A\right\vert h}\int_{0}%
%^{2h}\mathrm{e}^{\left\vert A\right\vert s}\left\vert B\right\vert
%\mathrm{d}s+\mathrm{e}^{3\left\vert A\right\vert h}\int_{2h}^{3h}%
%\mathrm{e}^{\left\vert A\right\vert v}\nonumber\\
%&  \times\left\vert B\right\vert
%%TCIMACRO{\dsum _{i=0}^{r-1}}%
%%BeginExpansion
%{\displaystyle\sum_{i=0}^{r-1}}
%%EndExpansion
%\left\vert \bar{K}_{i}(v+3lh)\right\vert \left\vert CA^{i}\right\vert
%\nonumber\\
%&  \times\int_{0}^{v-h}\left\vert \mathrm{e}^{A\left(  v-h-s\right)
%}B\right\vert \mathrm{d}s\mathrm{d}v+\mathrm{e}^{3\left\vert A\right\vert
%h}\int_{2h}^{3h}\mathrm{e}^{\left\vert A\right\vert v}\left\vert B\right\vert
%\nonumber\\
%&  \times%
%%TCIMACRO{\dsum _{i=1}^{r-1}}%
%%BeginExpansion
%{\displaystyle\sum_{i=1}^{r-1}}
%%EndExpansion
%\left\vert \bar{K}_{i}(v+3lh)\right\vert \int_{-id}^{0}\left\vert \varphi
%_{i}\left(  -s\right)  \right\vert \left\vert CA^{i+1}\right\vert \nonumber\\
%&  \times\int_{0}^{s+v-h}\left\vert \mathrm{e}^{A\left(  s+v-h-u\right)
%}B\right\vert \mathrm{d}u\mathrm{d}s\mathrm{d}v\nonumber\\
%&  +\mathrm{e}^{3\left\vert A\right\vert h}\int_{2h}^{3h}\mathrm{e}%
%^{\left\vert A\right\vert v}\left\vert B\right\vert \mathrm{d}v,
%\label{delta_1}%
%\end{align}
\begin{align}
\delta_{1\infty}
&= \begin{array}{ll}
	\mathrm{e}^{3|A|h} \int_{0}^{2h} \mathrm{e}^{|A|s} |B| \, \mathrm{d}s
\end{array} \nonumber \\
&\quad + \begin{array}{ll}
	\mathrm{e}^{3|A|h} \int_{2h}^{3h} \mathrm{e}^{|A|v} |B|
	\sum_{i=0}^{r-1} |\bar{K}_{i}(v+3lh)| \, |CA^{i}|
\end{array} \nonumber \\
&\quad \begin{array}{ll}
	\times \int_{0}^{v-h} |\mathrm{e}^{A(v-h-s)}B| \, \mathrm{d}s \, \mathrm{d}v
\end{array} \nonumber \\
&\quad + \begin{array}{ll}
	\mathrm{e}^{3|A|h} \int_{2h}^{3h} \mathrm{e}^{|A|v} |B|
	\sum_{i=1}^{r-1} |\bar{K}_{i}(v+3lh)|
\end{array} \nonumber \\
&\quad \begin{array}{ll}
	\times \int_{-id}^{0} |\varphi_{i}(-s)| \, |CA^{i+1}|
\end{array} \nonumber \\
&\quad \begin{array}{ll}
	\times \int_{0}^{s+v-h} |\mathrm{e}^{A(s+v-h-u)}B| \, \mathrm{d}u \, \mathrm{d}s \, \mathrm{d}v
\end{array} \nonumber \\
&\quad + \begin{array}{ll}
	\mathrm{e}^{3|A|h} \int_{2h}^{3h} \mathrm{e}^{|A|v} |B|
	|\bar{K}_{r-1}(v+3lh)|
\end{array} \nonumber \\
&\quad \begin{array}{ll}
	\times \int_{-(r-1)d}^{0}|\varphi_{r-1}(-s)|
	|CA^{r-1}B|\,\mathrm{d}s\,\mathrm{d}v
\end{array} \nonumber \\
&\quad + \begin{array}{ll}
	\mathrm{e}^{3|A|h} \int_{2h}^{3h} \mathrm{e}^{|A|v} |B| \, \mathrm{d}v,
\end{array} \label{delta_1} \\
\delta_{2\infty}
&= \begin{array}{ll}
	\mathrm{e}^{3|A|h} \int_{2h}^{3h} \mathrm{e}^{|A|v} |B|
	\sum_{i=0}^{r-1} |K_{i}(v+3lh)| \, \mathrm{d}v.
\end{array} \label{delta_2}
\end{align}
%\begin{equation}
%\delta_{2\infty}=\mathrm{e}^{3\left\vert A\right\vert h}\int_{2h}%
%^{3h}\mathrm{e}^{\left\vert A\right\vert v}\left\vert B\right\vert
%%TCIMACRO{\dsum _{i=0}^{r-1}}%
%%BeginExpansion
%{\displaystyle\sum_{i=0}^{r-1}}
%%EndExpansion
%\left\vert K_{i}(v+3lh)\right\vert \mathrm{d}v. \label{delta_2}%
%\end{equation}
Here $K_{i}(t)\in\mathbb{R}^{m\times p}$ for $i=0,1,\ldots,r-1$ are the block
constituents of $K(t)$, i.e.,
\begin{equation}
K(t)=[K_{0}(t),K_{1}(t),\ldots,K_{r-1}(t)]\in\mathbf{%
%TCIMACRO{\U{211d} }%
%BeginExpansion
\mathbb{R}
%EndExpansion
}^{m\times n}, \label{K_block}%
\end{equation}
such that
\begin{equation}
K(t)=\bar{K}(t)M^{\mathrm{T}},
\label{K_Kbar_M_relation}
\end{equation}
with $\bar{K}(t)$ defined in (\ref{Kbar_block_def}) and $M$ being the block
upper triangular matrix whose $(i,j)$-th block is given in (\ref{Mij}).

We are in a position to state the following result:

\begin{theorem}
\label{the_2}Consider system (\ref{sys_2}) under Assumptions \ref{ass1}%
--\ref{ass2}. Given $h\geq(r-1)d$, choose $d>0$ such that $W_{\mathrm{o}%
}+L(d)$ is invertible. Consider the smooth time-varying static output feedback
controller
\begin{equation}
u\left(  t\right)  =%
%TCIMACRO{\dsum _{i=0}^{r-1}}%
%BeginExpansion
{\displaystyle\sum_{i=0}^{r-1}}
%EndExpansion
g(t)K_{i}(t)y\left(  t-id-h\right)  , \label{controller}%
\end{equation}
where $K_{i}(t)\in\mathbb{R}^{m\times p}$, $i=0,1,\ldots,r-1$, are the block
constituents of $K(t)\in\mathbb{R}^{m\times n}$, as in (\ref{K_block}),
satisfying $K(t)=\bar K(t)M^{\mathrm{T}}$, with $M$ and $\bar K(t)$ being
defined in (\ref{Mij}) and (\ref{K_hat}), respectively. Then the solution to
the closed-loop system satisfies
\begin{equation}
\left\vert x(t)\right\vert \leq\alpha\cdot\left(  w_{1\infty}+w_{2\infty
}\right)  ,\quad t\geq3h, \label{x_iss}%
\end{equation}
meaning, prescribed-time ISS within a prescribed time $3h$. In particular, if $w_{1\infty}=0$ and
$w_{2\infty}=0$, we have $x(t)=0$, for $t\geq3h$, meaning,
prescribed-time stable within a prescribed time $3h$.
\end{theorem}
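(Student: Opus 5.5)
Our approach is to show that over each period $[3lh,3(l+1)h]$ the closed loop acts as an act-and-wait map that sends the state at time $3lh$ exactly to zero, up to terms driven by $w_1,w_2$. On $[3lh,(3l+2)h]$ we have $g=0$, so $u=0$ and the system runs open loop:
\[
x(t)=\mathrm e^{A(t-3lh)}x(3lh)+\int_{3lh}^{t}\mathrm e^{A(t-s)}Bw_1(s)\,\mathrm ds .
\]
On $((3l+2)h,3(l+1)h)$ the controller uses $y(t-id-h)$ with $t-id-h\ge t-(r-1)d-h\ge t-2h>3lh$ because $h\ge(r-1)d$. So only samples taken in the current period after $3lh$ enter, and these lie in the open-loop interval $[3lh,(3l+1)h]$ (indeed $t-h<(3l+2)h$, and these arguments also stay in the wait phase).

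\textbf{Step 1: the delayed outputs as a linear function of the state at the earlier time.} Write $y(t-id-h)=y_0(t-id-h)+w_2(t-id-h)$. By (\ref{u_normal2}) and $K=\bar KM^{\mathrm T}$, $\sum_i K_i y_0(t-h-id)=\sum_i\bar K_i\tilde y_i(t-h)$. The lemma of \cite{anton18auto} gives
\[
\tilde y_i(t-h)=y_0^{(i)}(t-h)-\int_{t-h-id}^{t-h}\varphi_i(t-h-s)\,y_0^{(i+1)}(s)\,\mathrm ds .
\]
Since $u=0$ there, (\ref{cab}) and (\ref{y_norm_def}) give $y_0^{(i)}(s)=CA^ix(s)$ for $i\le r-1$. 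For $i=r-1$ we have $y_0^{(r)}(s)=CA^rx(s)+CA^{r-1}Bw_1(s)$. In the disturbance-free part substitute $x(s)=\mathrm e^{-A(t-h-s)}x(t-h)$. Changing variables $u=s-(t-h)$ then yields exactly
\[
\tilde y_i(t-h)=(CA^i+L_i(d))\,x(t-h)+(\text{$w_1$-terms}),
\]
with $L_i(d)$ as in (\ref{L_h_def}), the sign convention $\int_{id}^0$ to be checked. Stacking, $\sum_i\bar K_i\tilde y_i=\bar K(t)(W_{\mathrm o}+L(d))x(t-h)+\ldots$. The invertibility assumption cancels the factor $(W_{\mathrm o}+L(d))^{-1}$ in (\ref{K_hat}), leaving
\[
u(t)=-S(t)B^{\mathrm T}\mathrm e^{A^{\mathrm T}(3lh-t)}W_{\mathrm c}\,\mathrm e^{-A(t-h-3lh)}x(t-h)+\ldots .
\]
Further, $\mathrm e^{-A(t-h-3lh)}x(t-h)=x(3lh)+(\text{$w_1$-integral})$ because $t-h$ lies in the open-loop interval. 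Hence the nominal control is $-S(t)B^{\mathrm T}\mathrm e^{-A^{\mathrm T}(t-3lh)}W_{\mathrm c}x(3lh)$.

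\textbf{Step 2: exact cancellation.} Using variation of constants on $[3lh,3(l+1)h]$ and periodicity of $S$,
\[
x(3(l+1)h)=\mathrm e^{3Ah}\Big[x(3lh)-\Big(\int_{2h}^{3h}\mathrm e^{-Av}BS(v)B^{\mathrm T}\mathrm e^{-A^{\mathrm T}v}\,\mathrm dv\Big)W_{\mathrm c}x(3lh)\Big]+\Delta_l .
\]
By (\ref{W_c_def}) the bracket vanishes; $W_{\mathrm c}$ is well defined by the controllability argument already given. Thus $x(3(l+1)h)=\Delta_l$, where $\Delta_l$ collects the contributions of $w_1$ and $w_2$.

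\textbf{Step 3: bounds.} Bound $\Delta_l$ term by term. The $w_1$ contributions are: the open-loop integral, the $w_1$-part of $x(t-h)$ propagated through $CA^i$, the remainder integrals with $CA^{i+1}$ and $\varphi_i$, the $CA^{r-1}Bw_1$ term, and the direct $Bw_1$ during the act phase. Together these give $|\Delta_l|\le\delta_{1\infty}w_{1\infty}+\delta_{2\infty}w_{2\infty}$ (with $w_2$ entering only through $K_i$). Then $|x(3lh)|\le(\delta_{1\infty}+\delta_{2\infty})(w_{1\infty}+w_{2\infty})$ for $l\ge1$. Finally, for $t$ inside the following period, bound $x(t)$ from $x(3lh)$: in the wait phase this gives the $\alpha_2$-type estimate, and in the act phase the control adds the $\bar K_i(CA^i+L_i)\mathrm e^{A(v-h)}$ term, giving $\alpha_1$. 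Taking the maximum yields (\ref{x_iss}), and zero disturbances give $x\equiv0$ for $t\ge3h$.

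The main obstacle is Step 1: carefully deriving the identity $\tilde y_i(t-h)=(CA^i+L_i(d))x(t-h)+\ldots$. This requires the precise kernel $\varphi_i$ and integration direction, and the special handling of the $i=r-1$ term, where $y_0^{(r)}$ picks up $CA^{r-1}Bw_1$. It also requires confirming that every delayed sample lies in the open-loop window, which is exactly where $h\ge(r-1)d$ is needed.
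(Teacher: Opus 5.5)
Your proposal is correct and follows essentially the same route as the paper. The paper likewise uses open-loop propagation on the wait phase, and applies the finite-difference identity of \cite{anton18auto} to turn $\sum_i K_i(t)y_0(t-id-h)$ into $\bar K(t)(W_{\mathrm o}+L(d))\mathrm e^{A(t-h-3lh)}x(3lh)$ plus $w_1,w_2$ terms. The sign convention you flagged does check out: substituting $u=t-h-s$ gives exactly $L_i(d)=CA^{i+1}\int_{id}^{0}\varphi_i(u)\mathrm e^{-Au}\,\mathrm du$. From there the paper also cancels exactly via $W_{\mathrm c}$, bounds $x(3(l+1)h)$ by $\delta_{1\infty}w_{1\infty}+\delta_{2\infty}w_{2\infty}$, and uses the $\alpha_1/\alpha_2$ estimates inside the next period.

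One small slip: the delayed sample times $t-id-h$ lie in $(3lh,(3l+2)h)$, not in $[3lh,(3l+1)h]$; for instance, $t-h\in((3l+1)h,(3l+2)h)$. This is still inside the wait phase where $u=0$, so nothing in your argument changes.
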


\begin{proof}
For $t\in\lbrack3lh,2h+3lh]$, $l\in%
%TCIMACRO{\U{2124} }%
%BeginExpansion
\mathbb{Z}
%EndExpansion
_{0}^{+}$, in view of$\ g\left(  t\right)  =0$, we have $\dot{x}\left(
t\right)  =Ax\left(  t\right)  +Bw_{1}\left(  t\right)  $, whose solution can
be expressed as
\begin{equation}
x\left(  t\right)  =\mathrm{e}^{A\left(  t-3lh\right)  }x\left(  3lh\right)
+\int_{3lh}^{t}\mathrm{e}^{A\left(  t-s\right)  }Bw_{1}\left(  s\right)
\mathrm{d}s. \label{x_t}%
\end{equation}
For $t\in\lbrack3lh+2h,3\left(  l+1\right)  h]$, $l\in%
%TCIMACRO{\U{2124} }%
%BeginExpansion
\mathbb{Z}
%EndExpansion
_{0}^{+}$, by noting $g\left(  t\right)  =1$, and using
(\ref{y_norm_def}), (\ref{y_i_hat}), (\ref{y_h}), and
(\ref{K_Kbar_M_relation}), the closed-loop system (\ref{sys_2}) and
(\ref{controller}) can be represented as
\begin{align}
\dot{x}(t) =  &  Ax(t) + B \sum_{i=0}^{r-1} K_{i}(t) y_{0}(t-id-h)\nonumber\\
&  + B \sum_{i=0}^{r-1} K_{i}(t) w_{2}(t-id-h) + B w_{1}(t)\nonumber\\
=  &  Ax(t) + B \sum_{i=0}^{r-1} \bar{K}_{i}(t) \tilde{y}_{i}(t-h)\nonumber\\
&  + B \sum_{i=0}^{r-1} K_{i}(t) w_{2}(t-id-h) + B w_{1}(t)\nonumber\\
=  &  Ax(t) + B \sum_{i=0}^{r-1} \bar{K}_{i}(t) C A^{i} x(t-h)
+B w_{1}(t)\nonumber\\
&  + B \sum_{i=0}^{r-1} R_{i}(t)
+B \sum_{i=0}^{r-1} K_{i}(t) w_{2}(t-id-h),\label{x_dot}%
\end{align}
where for $i=0,1,\ldots,r-1$,%
\[
R_{i}\left(  t\right)
=-\bar{K}_{i}(t)\int_{t-id-h}^{t-h}\varphi_{i}\left(
t-h-s\right)y_{0}^{\left(i+1\right)}\left(s\right)\mathrm{d}s.
\]
Since $t-h\in\lbrack3lh,3lh+2h]$ and
$t-id-h\in\lbrack3lh,3lh+2h]$, we have $u(s)=0$ on the corresponding
integration intervals. By using (\ref{cab}), (\ref{y_norm_def}), and (\ref{x_t}), we obtain
\[
y_{0}^{(i+1)}(s)
=
CA^{i+1}x(s)
+
\eta_i CA^{r-1}Bw_{1}(s),
\]
where $\eta_i=1$ if $i=r-1$ and
$\eta_i=0$ otherwise. Therefore,
\begin{align}
&  \int_{t-id-h}^{t-h}\varphi_{i}(t-h-s)y_{0}^{(i+1)}(s)\mathrm{d}%
s\nonumber\\
&  =\int_{t-id-h}^{t-h}\varphi_{i}(t-h-s)CA^{i+1}x(s)\mathrm{d}s\nonumber\\
&\quad+\eta_i
\int_{t-id-h}^{t-h}\varphi_{i}(t-h-s)
CA^{r-1}Bw_{1}(s)\mathrm{d}s\nonumber\\
&  =\int_{t-id-h}^{t-h}\varphi_{i}(t-h-s)CA^{i+1}
\mathrm{e}^{A(s-3lh)}\mathrm{d}s x(3lh)\nonumber\\
&\quad+\int_{t-id-h}^{t-h}\varphi_{i}(t-h-s)CA^{i+1}
\int_{3lh}^{s}\mathrm{e}^{A(s-v)}Bw_{1}(v)
\mathrm{d}v\mathrm{d}s\nonumber\\
&\quad+\eta_i
\int_{t-id-h}^{t-h}\varphi_{i}(t-h-s)
CA^{r-1}Bw_{1}(s)\mathrm{d}s. \label{11}%
\end{align}
Substituting (\ref{11}) into (\ref{x_dot}) yields
\begin{align}
\dot{x}( t) =  &  Ax( t) +B%
%TCIMACRO{\dsum _{i=0}^{r-1}}%
%BeginExpansion
{\displaystyle\sum_{i=0}^{r-1}}
%EndExpansion
\bar{K}_{i}(t)CA^{i}\mathrm{e}^{A( t-h-3lh) }x( 3lh)\nonumber\\
&  -B%
%TCIMACRO{\dsum _{i=0}^{r-1}}%
%BeginExpansion
{\displaystyle\sum_{i=0}^{r-1}}
%EndExpansion
\bar{K}_{i}(t)\int_{t-id-h}^{t-h}\varphi_{i}( t-h-s)\nonumber\\
&  \times CA^{i+1}\mathrm{e}^{A( s-3lh) }\mathrm{d}s x( 3lh)
+\psi(t),\label{x_dot3}%
\end{align}
where
\begin{align}
\psi(t) =  &  B%
%TCIMACRO{\dsum _{i=0}^{r-1}}%
%BeginExpansion
{\displaystyle\sum_{i=0}^{r-1}}
%EndExpansion
\bar{K}_{i}(t)CA^{i}\int_{3lh}^{t-h}
\mathrm{e}^{A(t-h-s)}Bw_{1}(s)\mathrm{d}s\nonumber\\
&  -B%
%TCIMACRO{\dsum _{i=0}^{r-1}}%
%BeginExpansion
{\displaystyle\sum_{i=0}^{r-1}}
%EndExpansion
\bar{K}_{i}(t)\int_{t-id-h}^{t-h}
\varphi_{i}(t-h-s)CA^{i+1}\nonumber\\
&  \times\int_{3lh}^{s}\mathrm{e}^{A(s-v)}Bw_{1}(v)
\mathrm{d}v\mathrm{d}s\nonumber\\
&  -B\bar{K}_{r-1}(t)
\int_{t-(r-1)d-h}^{t-h}\varphi_{r-1}(t-h-s)\nonumber\\
&  \times CA^{r-1}Bw_{1}(s)\mathrm{d}s\nonumber\\
&  +B%
%TCIMACRO{\dsum _{i=0}^{r-1}}%
%BeginExpansion
{\displaystyle\sum_{i=0}^{r-1}}
%EndExpansion
K_{i}(t)w_{2}(t-id-h)+Bw_{1}(t) . \label{phi_1}%
\end{align}
By the variation of constants formula, the solution $x(t)$ to
(\ref{x_dot3}) can be expressed as
\begin{equation}
x(t)=\mathit{\Delta}(t)x(3lh)+\pi(t), \label{x_t2}%
\end{equation}
where%
\begin{align*}
\mathit{\Delta}(t)
&=\mathrm{e}^{A(t-3lh)}\\
&\quad+\int_{3lh+2h}^{t}\mathrm{e}^{A(t-v)}B%
%TCIMACRO{\dsum _{i=0}^{r-1}}%
%BeginExpansion
{\displaystyle\sum_{i=0}^{r-1}}
%EndExpansion
\bar{K}_{i}(v)E_{i}(d)\mathrm{e}^{A(v-h)}
\mathrm{d}v\mathrm{e}^{-3lhA},
\end{align*}
and
\begin{align*}
\pi(t)=  &  \mathrm{e}^{A(t-3lh)}
\int_{3lh}^{3lh+2h}\mathrm{e}^{A(3lh-s)}
Bw_{1}(s)\mathrm{d}s\\
&  +\int_{3lh+2h}^{t}\mathrm{e}^{A(t-v)}
\psi(v)\mathrm{d}v,
\end{align*}
with $E_{i}(d)$ for $i=0,1,\ldots,r-1$ being the block constituents of
$E(d)$, i.e.,
\begin{equation}
E(d)=[E_{0}^{\mathrm{T}}(d),E_{1}^{\mathrm{T}}(d),\ldots,
E_{r-1}^{\mathrm{T}}(d)]^{\mathrm{T}}%
=W_{\mathrm{o}}+L(d), \label{E_def}%
\end{equation}
where $L(d)$ is defined in (\ref{L_h_whole_def}).
Therefore, for $t=3(l+1)h$, from (\ref{K_hat}) it follows that
\begin{align*}
\mathit{\Delta}(3(l+1)h)
=  &  \mathrm{e}^{3Ah}\left(
I_{n}+\int_{2h}^{3h}\mathrm{e}^{-Av}B\bar{K}(v+3lh)\right.\\
&  \times\left.
(W_{\mathrm{o}}+L(d))\mathrm{e}^{A(v-h)}\mathrm{d}v\right)
=0_{n}.
\end{align*}
Thus we have $x(3(l+1)h)=\pi(3(l+1)h)$. In view of
(\ref{delta_1}) and (\ref{delta_2}), $\pi(t)$ satisfies
$\vert\pi(t)\vert\leq\delta_{1\infty}w_{1\infty}
+\delta_{2\infty}w_{2\infty}$. For
$t\in\lbrack3lh,3(l+1)h]$, $l\in%
%TCIMACRO{\U{2115} }%
%BeginExpansion
\mathbb{N}
%EndExpansion
$, in view of (\ref{x_t}) and (\ref{x_t2}), we obtain
(\ref{x_iss}).
\end{proof}

\begin{remark}
\label{rem1}According to (\ref{Mij}), we have
$\lim_{d\to0^{+}}\vert M\vert=\infty$. From
(\ref{alpha_1}) and (\ref{alpha_2}), it can be observed that a very small
$d$ may cause $\alpha$ in (\ref{x_iss}) to become large, meaning that the disturbances are amplified.
\end{remark}

\subsection{Systems in Controllable Canonical Form}

A more explicit controller gain will be presented in this section for systems
in canonical form. Consider (\ref{sys_2}) with $x(t)=[x_{0}^{\mathrm{T}%
}(t),x_{1}^{\mathrm{T}}(t),\ldots,x_{r-1}^{\mathrm{T}}(t)]^{\mathrm{T}}%
\in\mathbf{%
%TCIMACRO{\U{211d} }%
%BeginExpansion
\mathbb{R}
%EndExpansion
}^{rp}$, $x_{i}(t)\in\mathbf{%
%TCIMACRO{\U{211d} }%
%BeginExpansion
\mathbb{R}
%EndExpansion
}^{p}$, $A_{i}\in\mathbf{%
%TCIMACRO{\U{211d} }%
%BeginExpansion
\mathbb{R}
%EndExpansion
}^{p\times p}$, $\bar{B}\in\mathbf{%
%TCIMACRO{\U{211d} }%
%BeginExpansion
\mathbb{R}
%EndExpansion
}^{p\times m}$, $u\left(  t\right)  \in\mathbf{%
%TCIMACRO{\U{211d} }%
%BeginExpansion
\mathbb{R}
%EndExpansion
}^{m}$,
\begin{align}
A  &  =\left[
\begin{array}
[c]{ccccc}%
0 & I_{p} & 0 & \cdots & 0\\
0 & 0 & I_{p} & \cdots & 0\\
\vdots & \vdots & \vdots & \vdots & \vdots\\
0 & 0 & 0 & \cdots & I_{p}\\
A_{0} & A_{1} & A_{2} & \cdots & A_{r-1}%
\end{array}
\right]  \in\mathbf{%
%TCIMACRO{\U{211d} }%
%BeginExpansion
\mathbb{R}
%EndExpansion
}^{rp\times rp},\label{A}\\
B  &  =\left[
\begin{array}
[c]{ccccc}%
0^{\mathrm{T}} & 0^{\mathrm{T}} & \cdots & 0^{\mathrm{T}} & \bar
{B}^{\mathrm{T}}%
\end{array}
\right]  ^{\mathrm{T}}\in\mathbf{%
%TCIMACRO{\U{211d} }%
%BeginExpansion
\mathbb{R}
%EndExpansion
}^{rp\times m},\label{B}\\
C  &  =\left[
\begin{array}
[c]{ccccc}%
I_{p} & 0 & \cdots & 0 & 0
\end{array}
\right]  \in\mathbf{%
%TCIMACRO{\U{211d} }%
%BeginExpansion
\mathbb{R}
%EndExpansion
}^{p\times rp}. \label{C}%
\end{align}

Denote
\begin{equation}
\mathcal{L}(d)=\big[\,\mathcal{L}_{0}^{\mathrm{T}}(d),\
\mathcal{L}_{1}^{\mathrm{T}}(d),\ \ldots,\
\mathcal{L}_{r-1}^{\mathrm{T}}(d)\,\big]^{\mathrm{T}}
\in\mathbb{R}^{rp\times rp},
\label{calL_def}
\end{equation}
where, for $i=0,1,\ldots,r-1$, each block row $\mathcal{L}_{i}(d)\in
\mathbb{R}^{p\times rp}$ is given by
\begin{equation}
\mathcal{L}_{i}(d)=\frac{(-1)^{r}}{(r-1)!}\int_{-d_{i}}^{0}(v+d_{i}%
)^{r-1}e_{p,r-1}^{\mathrm{T}}A\mathrm{e}^{Av}\mathrm{d}v,\label{L_i}%
\end{equation}
with $d_{i}=id$ and $e_{p,r-1}=[0,\ldots,0,I_{p}]^{\mathrm{T}}\in
\mathbb{R}^{rp\times p}$. The matrix $\mathcal{M}\in\mathbb{R}^{rp\times rp}$
is defined as
\begin{equation}
\mathcal{M}=\left[
\begin{array}
[c]{cccc}%
I_{p} & 0 & \cdots & 0\\
I_{p} & -d_{1}I_{p} & \cdots & \frac{(-d_{1})^{r-1}}{(r-1)!}I_{p}\\
\vdots & \vdots & \vdots & \vdots\\
I_{p} & -d_{r-1}I_{p} & \cdots & \frac{(-d_{r-1})^{r-1}}{(r-1)!}I_{p}%
\end{array}
\right]  \in\mathbf{%
%TCIMACRO{\U{211d} }%
%BeginExpansion
\mathbb{R}
%EndExpansion
}^{rp\times rp}.\label{Mm}%
\end{equation}

Define
\begin{align}
\bar{\mathcal{K}}(t) =  &  -S(t)B^{\mathrm{T}}\mathrm{e}^{A^{\mathrm{T}%
}(3lh-t)} W_{\mathrm{c}}\mathrm{e}^{-A(t-3lh-h)}\nonumber\\
&  \times\left(  I_{rp}+\mathcal{M}^{-1}\mathcal{L}(d) \right) ^{-1}%
,\label{K_t_hat}%
\end{align}
where $W_{\mathrm{c}}$ is defined in (\ref{W_c_def}). We next show that
$I_{rp}+\mathcal{M}^{-1}\mathcal{L}(d)$ is invertible for sufficiently small
$d$ and that the controller based on the gain $\bar{\mathcal{K}}(t)$
stabilizes the system in prescribed time.

Since $d_{i}=id$ for $i=0,1,\ldots,r-1$, Lemma 1 in
\cite{emilia17auto} gives $\mathcal{M}^{-1}=O(d^{-r+1})$. Moreover,
\begin{align*}
& \mathcal{L}_{i}(d)=  \frac{(-1)^{r}}{(r-1)!}\int_{-id}^{0}(u+id)^{r-1}%
e_{p,r-1}^{\mathrm{T}}A\mathrm{e}^{Au}\mathrm{d}u\\
\overset{u=dv}{=} &  \frac{(-1)^{r}}{(r-1)!}d^{r}\int_{-i}^{0}(v+i)^{r-1}%
e_{p,r-1}^{\mathrm{T}}A\mathrm{e}^{Adv}\mathrm{d}v= O(d^{r}),
\end{align*}
which implies that $\lim_{d\rightarrow0^{+}}\mathcal{M}^{-1}\mathcal{L}%
(d)=\lim_{d\rightarrow0^{+}}O(d)=0$. Therefore, there exists a sufficiently
small constant $d_{\ast}$, such that for $d\in(0,d_{\ast})$, $I_{rp}%
+\mathcal{M}^{-1}\mathcal{L}(d)$ is invertible. Consequently, $\bar
{\mathcal{K}}(t)$ is well defined. We next introduce several constants that
will be used to derive an a priori estimate for the closed-loop solutions.

To state the following estimate compactly, define the constants
\begin{equation}
\beta_{1}=\left(  \mathrm{e}^{3|A|h}+1+\mathrm{e}^{3|A|h}\lambda_{0}\right)
\lambda_{1}+\mathrm{e}^{3|A|h}\int_{0}^{3h}\mathrm{e}^{|A|s}|B|\mathrm{d}s,
\label{beta_1}%
\end{equation}
and
\begin{equation}
\beta_{2}=\mathrm{e}^{3|A|h}\lambda_{0}\lambda_{2}+\lambda_{2}+\mathrm{e}%
^{3|A|h}\lambda_{2}. \label{beta_2}%
\end{equation}
Here, the auxiliary constants $\lambda_{0}$, $\lambda_{1}$, and $\lambda_{2}$
are given by
\begin{align}
\lambda_0
&= \begin{array}{ll}
	1 + \int_{2h}^{3h} \mathrm{e}^{|A|s} |B| \, |\bar{\mathcal{K}}(s+3lh)|
\end{array} \nonumber \\
&\quad \begin{array}{ll}
	\times |(I_{rp}+\mathcal{M}^{-1}\mathcal{L}(d))| \, |\mathrm{e}^{A(s-h)}| \, \mathrm{d}s,
\end{array} \nonumber \\
\lambda_1
&= \begin{array}{ll}
	\mathrm{e}^{3|A|h} \int_{0}^{2h} \mathrm{e}^{|A|s} |B| \, \mathrm{d}s
	+ \mathrm{e}^{3|A|h} \int_{2h}^{3h} \mathrm{e}^{|A|u} |B|
\end{array} \nonumber \\
&\quad \begin{array}{ll}
	\times |\bar{\mathcal{K}}(u+3lh)| \left( \mathrm{e}^{3|A|h} \int_{0}^{u-h} |\mathrm{e}^{A(u-s-h)}| \, |B| \, \mathrm{d}s \right) \mathrm{d}u
\end{array} \nonumber \\
&\quad + \begin{array}{ll}
	\frac{\mathrm{e}^{3|A|h}}{(r-1)!} \int_{2h}^{3h} \mathrm{e}^{|A|u}
	\sum_{i=0}^{r-1} |B\mathcal{K}_{i}(u+3lh)|
\end{array} \nonumber \\
&\quad \begin{array}{ll}
	\times \int_{-d_{i}}^{0} |(s+d_{i})^{r-1} e_{p,r-1}^{\mathrm{T}} A|
	\int_{0}^{s+u-h} |\mathrm{e}^{A(s+u-h-v)}|
\end{array} \nonumber \\
&\quad \begin{array}{ll}
	\times |B| \, \mathrm{d}v \, \mathrm{d}s \, \mathrm{d}u+\mathrm{e}^{3|A|h} \int_{2h}^{3h} \mathrm{e}^{|A|s} |B| \, \mathrm{d}s
\end{array} \nonumber \\
&\quad + \begin{array}{ll}
	\frac{\mathrm{e}^{3|A|h}}{(r-1)!} \int_{2h}^{3h} \mathrm{e}^{|A|u}
	\sum_{i=0}^{r-1} |B\mathcal{K}_{i}(u+3lh)|
\end{array} \nonumber \\
&\quad \begin{array}{ll}
	\times \int_{-d_{i}}^{0} |(s+d_{i})^{r-1} e_{p,r-1}^{\mathrm{T}} B| \, \mathrm{d}s \, \mathrm{d}u.
\end{array} \label{lemma_1}
\end{align}
Moreover,
\begin{align}
&\lambda_2
= \begin{array}{ll}
	\mathrm{e}^{3|A|h} \int_{2h}^{3h} \mathrm{e}^{|A|u}
	\sum_{i=0}^{r-1} |B\mathcal{K}_{i}(u+3lh)| \, \mathrm{d}u.
\end{array} \label{lemma_2}
\end{align}
In the above expressions, $\mathcal{K}_{i}(t)\in\mathbb{R}^{m\times p}$,
$i=0,1,\ldots,r-1$, are the block columns of $\mathcal{K}(t)$, namely,
\begin{equation}
\mathcal{K}(t)=[\mathcal{K}_{0}(t),\mathcal{K}_{1}(t),\ldots,\mathcal{K}%
_{r-1}(t)]\in\mathbb{R}^{m\times rp}, \label{Ka_block}%
\end{equation}
where $\mathcal{K}(t)=\bar{\mathcal{K}}(t)\mathcal{M}^{-1}$ with $\bar{\mathcal{K}}(t)$ defined in (\ref{K_t_hat}) and $\mathcal{M}$
defined in (\ref{Mm}).

With the above preparation, we obtain the following conclusion.

\begin{theorem}
\label{the3}Consider system (\ref{sys_2}) with $(A,B,C)$ defined in
(\ref{A})--(\ref{C}) under Assumption \ref{ass1}. Given
$h\geq (r-1)d$, choose $d>0$ such that $I_{rp}+\mathcal{M}^{-1}\mathcal{L}\left(
d\right)  $ is invertible. Consider the smooth time-varying static output
feedback controller
\begin{equation}
u(t)=\sum_{i=0}^{r-1} g(t)\mathcal{K}_{i}(t)\, y(t-d_{i}-h),\label{output}%
\end{equation}
where for $i=0,1,\ldots,r-1$, $\mathcal{K}_{i}(t)\in\mathbb{R}^{m\times p}$ are
the block columns of $\mathcal{K}(t)\in\mathbb{R}^{m\times rp}$, as in
(\ref{Ka_block}), satisfying $\mathcal{K}(t)=\mathcal{\bar K}(t)\mathcal{M}^{-1}$
with $\mathcal{M}$ and $\mathcal{\bar K}(t)$ defined in (\ref{Mm}) and
(\ref{K_t_hat}), respectively. Then the solution to the closed-loop system
(\ref{sys_2}) and (\ref{output}) satisfies (\ref{x_iss}) with $\alpha=\max\{\beta_{1},\beta_{2}\}$, meaning, prescribed-time ISS within a prescribed time $3h$.
\end{theorem}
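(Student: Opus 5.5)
The plan mirrors the proof of Theorem~\ref{the_2}, with the finite-difference representation of Lemma~1 replaced by a Taylor expansion adapted to the canonical form (\ref{A})--(\ref{C}). On the waiting interval $[3lh,3lh+2h]$ we have $g=0$. Hence $x(t)=\mathrm{e}^{A(t-3lh)}x(3lh)+\int_{3lh}^{t}\mathrm{e}^{A(t-s)}Bw_{1}(s)\mathrm{d}s$, exactly as in (\ref{x_t}).

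\textbf{Key identity.} On the active interval $[3lh+2h,3(l+1)h]$, the condition $h\geq(r-1)d$ guarantees that every sampled time $t-d_i-h$ lies in $[3lh,3lh+2h]$, where $u\equiv0$. Because of the chain structure, $y_0=x_0$ and $x_0^{(k)}=x_k$ for $k\le r-1$, so $x_0^{(r)}=e_{p,r-1}^{\mathrm T}\dot x=e_{p,r-1}^{\mathrm T}(Ax+Bw_1)$. Taylor's formula with integral remainder of order $r$, expanded around $t-h$, then gives
\[
y_0(t-h-d_i)=\sum_{k=0}^{r-1}\frac{(-d_i)^k}{k!}x_k(t-h)+\frac{(-1)^{r}}{(r-1)!}\int_{-d_i}^{0}(v+d_i)^{r-1}e_{p,r-1}^{\mathrm T}\dot x(t-h+v)\mathrm{d}v .
\]
Stacking over $i$ yields $\mathcal{M}x(t-h)+(\text{remainder})$. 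We then substitute $\dot x(s)=Ax(s)+Bw_1(s)$ and the open-loop solution $x(s)=\mathrm{e}^{A(s-t+h)}x(t-h)+(\text{$w_1$-convolution})$ into the remainder. This produces exactly $\mathcal{L}(d)x(t-h)$ plus $w_1$-dependent terms, so
\[
[y_0(t-h-d_i)]_{i}=(\mathcal{M}+\mathcal{L}(d))x(t-h)+\rho(t),
\]
where $\rho$ collects the disturbance terms, including the $e_{p,r-1}^{\mathrm T}Bw_1$ part.

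\textbf{Closing the loop.} With $\mathcal{K}=\bar{\mathcal{K}}\mathcal{M}^{-1}$, the control equals $\bar{\mathcal{K}}(t)(I_{rp}+\mathcal{M}^{-1}\mathcal{L}(d))x(t-h)$ plus disturbance terms, together with $\mathcal{K}_i w_2$. Writing $x(t-h)=\mathrm{e}^{A(t-h-3lh)}x(3lh)+\cdots$ and using variation of constants on $[3lh+2h,t]$ gives $x(t)=\Delta(t)x(3lh)+\pi(t)$. Here
\[
\Delta(3(l+1)h)=\mathrm{e}^{3Ah}\Big(I+\int_{2h}^{3h}\mathrm{e}^{-Av}B\bar{\mathcal{K}}(v+3lh)(I+\mathcal{M}^{-1}\mathcal{L})\mathrm{e}^{A(v-h)}\mathrm{d}v\Big).
\]
Inserting (\ref{K_t_hat}) cancels $(I+\mathcal{M}^{-1}\mathcal{L})^{-1}$, and the $\mathrm{e}^{-A(v-h)}$ and $\mathrm{e}^{A(v-h)}$ factors cancel as well. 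By the definition of $W_{\mathrm c}$ in (\ref{W_c_def}), the integral is $-I$, so $\Delta(3(l+1)h)=0$. Therefore $x(3(l+1)h)=\pi(3(l+1)h)$, which depends only on the disturbances over the preceding period.

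\textbf{Bounds.} Next we bound $\pi$ term by term. The open-loop part and the $w_1$-convolution inside $x(t-h)$ give the first two terms of $\lambda_1$. The Taylor remainder, containing the $A$- and $B$-pieces of $e_{p,r-1}^{\mathrm T}\dot x$, gives the remaining terms of $\lambda_1$, and the noise gives $\lambda_2$. So over one period, $|x(3(l+1)h)|\le\lambda_1w_{1\infty}+\lambda_2w_{2\infty}$. For $t\in[3(l+1)h,3(l+2)h]$ we propagate from this value: $|\Delta(t)|$ is at most $\mathrm{e}^{3|A|h}\lambda_0$ on the active part and at most $\mathrm{e}^{3|A|h}$ on the waiting part. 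Adding the fresh disturbance contributions, $\lambda_1,\lambda_2$ and $\mathrm{e}^{3|A|h}\int_0^{3h}\mathrm{e}^{|A|s}|B|\mathrm{d}s$, reproduces $\beta_1$ and $\beta_2$. This yields (\ref{x_iss}) with $\alpha=\max\{\beta_1,\beta_2\}$ for $t\ge3h$.

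The main obstacle is the key identity: carrying out the Taylor expansion so that the $x(3lh)$-dependent part of the remainder matches $\mathcal{L}(d)$ exactly, including signs and the change of variables. This must hold even though the expansion point $t-h$ differs from the anchoring time $3lh$. It works because $\mathrm{e}^{A(s-3lh)}=\mathrm{e}^{Av}\mathrm{e}^{A(t-h-3lh)}$ with $s=t-h+v$, so the $\mathrm{e}^{Av}$ factor in (\ref{L_i}) is exactly what appears.
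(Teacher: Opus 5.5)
Your proposal is correct and follows essentially the same route as the paper. The Taylor expansion with integral remainder turns the delayed outputs into $\mathcal{M}x(t-h)$ plus a remainder whose $x(3lh)$-dependent part is $\mathcal{L}(d)\mathrm{e}^{A(t-h-3lh)}x(3lh)$; the cancellation $\mathit{\Delta}(3(l+1)h)=0$ then follows from (\ref{K_t_hat}) and (\ref{W_c_def}), and the $\lambda_0,\lambda_1,\lambda_2$ bookkeeping yields $\beta_1,\beta_2$. The one cosmetic difference is that you pass through $x(t-h)$ before anchoring at $x(3lh)$, so the $\mathcal{K}(t)\mathcal{L}(d)$ part of the convolution inside $x(t-h)$ has to be merged with the backward convolution in the remainder to recover the forward convolution from $3lh$ that defines $\lambda_1$; substituting the open-loop solution from $3lh$ directly into the remainder, as the paper does, gives the stated constants exactly.
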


\begin{proof}
In view of (\ref{sys_2}), by Taylor expansion with the integral
form, we have for $i=0,1,\ldots,r-1$,
\begin{align}
x_{0}(t-d_{i}) =  &
%TCIMACRO{\dsum _{j=0}^{r-1}}%
%BeginExpansion
{\displaystyle\sum_{j=0}^{r-1}}
%EndExpansion
\frac{1}{j!}(-d_{i})^{j}\frac{\mathrm{d}^{j}x_{0}(t)}{\mathrm{d}t^{j}%
}\nonumber\\
&  +\frac{(-1)^{r}}{(r-1)!}\int_{t-d_{i}}^{t}(s-t+d_{i})^{r-1}\frac
{\mathrm{d}^{r}x_{0}(s)}{\mathrm{d}s^{r}}\mathrm{d}s\nonumber\\
=  &
%TCIMACRO{\dsum _{j=0}^{r-1}}%
%BeginExpansion
{\displaystyle\sum_{j=0}^{r-1}}
%EndExpansion
\frac{1}{j!}(-d_{i})^{j}x_{j}(t)+\mathcal{R}_{i}(t), \label{x_0}%
\end{align}
where
\begin{equation}
\mathcal{R}_{i}(t)=\frac{(-1)^{r}}{(r-1)!}\int_{t-d_{i}}^{t}(s-t+d_{i}%
)^{r-1}x_{r}(s)\mathrm{d}s,\label{R_i01}%
\end{equation}
with $x_{r}(s)=\dot{x}_{r-1}(s)$.

Then the closed-loop system consisting of (\ref{sys_2}) and (\ref{output}) can
be written as
\begin{align}
\dot{x}(t) =  &  Ax(t)+%
%TCIMACRO{\dsum _{i=0}^{r-1}}%
%BeginExpansion
{\displaystyle\sum_{i=0}^{r-1}}
%EndExpansion
Bg(t)\mathcal{K}_{i}(t)x_{0}(t-d_{i}-h)\nonumber\\
&  +%
%TCIMACRO{\dsum _{i=0}^{r-1}}%
%BeginExpansion
{\displaystyle\sum_{i=0}^{r-1}}
%EndExpansion
Bg(t)\mathcal{K}_{i}(t)w_{2}(t-d_{i}-h) +Bw_{1}(t). \label{x_dot_clo}%
\end{align}
According to (\ref{x_0}) and  $\mathcal{K}(t)=\mathcal{\bar K}(t)\mathcal{M}^{-1}$, we obtain
\begin{align*}
&
%TCIMACRO{\dsum _{i=0}^{r-1}}%
%BeginExpansion
{\displaystyle\sum_{i=0}^{r-1}}
%EndExpansion
\mathcal{K}_{i}(t)x_{0}(t-d_{i}-h)\\
&  =%
%TCIMACRO{\dsum _{j=0}^{r-1}}%
%BeginExpansion
{\displaystyle\sum_{j=0}^{r-1}}
%EndExpansion
\mathcal{\bar{K}}_{j}(t)x_{j}(t-h)+%
%TCIMACRO{\dsum _{i=0}^{r-1}}%
%BeginExpansion
{\displaystyle\sum_{i=0}^{r-1}}
%EndExpansion
\mathcal{K}_{i}(t)\mathcal{R}_{i}(t-h).
\end{align*}
Substituting this into (\ref{x_dot_clo}) yields
\begin{align}
 &\dot{x}(t)  =Ax(t)+Bg(t)\mathcal{\bar{K}}(t)x(t-h)+Bw_{1}(t)\nonumber\\
&  +%
%TCIMACRO{\dsum _{i=0}^{r-1}}%
%BeginExpansion
{\displaystyle\sum_{i=0}^{r-1}}
%EndExpansion
Bg(t)\mathcal{K}_{i}(t)(\mathcal{R}_{i}(t-h)+w_{2}(t-d_{i}-h))
.\label{x_dot2}%
\end{align}

For $t\in\lbrack3lh,2h+3lh]$, $l\in%
%TCIMACRO{\U{2124} }%
%BeginExpansion
\mathbb{Z}
%EndExpansion
_{0}^{+}$, in view of $g(t)=0$, (\ref{x_dot2}) reduces to $\dot{x}%
(t)=Ax(t)+Bw_{1}(t)$, whose solution can be calculated as
\begin{equation}
x(t)=\mathrm{e}^{A(t-3lh)}x(3lh)+\int_{3lh}^{t}\mathrm{e}^{A(t-s)}%
Bw_{1}(s)\mathrm{d}s.\label{x_t_part1}%
\end{equation}
For $t\in\lbrack2h+3lh,3(l+1)h]$, $l\in%
%TCIMACRO{\U{2124} }%
%BeginExpansion
\mathbb{Z}
%EndExpansion
_{0}^{+}$, by noting $g(t)=1$, (\ref{x_dot2}) can be rewritten as
\begin{align}
\dot{x}(t)= &  Ax(t)+B\mathcal{\bar{K}}(t)x(t-h)+%
%TCIMACRO{\dsum _{i=0}^{r-1}}%
%BeginExpansion
{\displaystyle\sum_{i=0}^{r-1}}
%EndExpansion
B\mathcal{K}_{i}(t)\mathcal{R}_{i}(t-h)\nonumber\\
&  +%
%TCIMACRO{\dsum _{i=0}^{r-1}}%
%BeginExpansion
{\displaystyle\sum_{i=0}^{r-1}}
%EndExpansion
B\mathcal{K}_{i}(t)w_{2}(t-d_{i}-h)+Bw_{1}(t).\label{x_t_cl2}%
\end{align}
Since $t-h\in\lbrack3lh,2h+3lh]$ and $t-d_{i}-h\in\lbrack3lh,2h+3lh]$ for
$d_{i}\leq h$, $x_{r}(s)$ in $\mathcal{R}_{i}(t-h)$ can be computed as
\begin{align*}
x_{r}(t)= &  \dot{x}_{r-1}(t)=e_{p,r-1}^{\mathrm{T}}\dot{x}(t)=e_{p,r-1}%
^{\mathrm{T}}(Ax(t)+Bw_{1}(t))\\
= &  e_{p,r-1}^{\mathrm{T}}A\mathrm{e}^{A(t-3lh)}x(3lh)\\
&  +e_{p,r-1}^{\mathrm{T}}A\int_{3lh}^{t}\mathrm{e}^{A(t-s)}Bw_{1}%
(s)\mathrm{d}s+e_{p,r-1}^{\mathrm{T}}Bw_{1}(t).
\end{align*}
With this and (\ref{x_t_part1}), (\ref{x_t_cl2}) can be continued as
\begin{equation}
\dot{x}(t)=Ax(t)+B\mathit{\Theta}(t)\mathrm{e}^{-3lhA}x(3lh)+\mathit{\Lambda
}(t),\label{x_23}%
\end{equation}
where
\begin{equation}
\mathit{\Theta}(t)=\mathcal{\bar{K}}(t)\mathrm{e}^{A(t-h)}+\mathcal{K}%
(t)\mathcal{L}(d)\mathrm{e}^{A(t-h)},\label{cetatt}%
\end{equation}
and%
\begin{align*}
\mathit{\Lambda}(t)= &  B\mathcal{\bar{K}}(t)\int_{3lh}^{t-h}\mathrm{e}%
^{A(t-s-h)}Bw_{1}(s)\mathrm{d}s\\
&  +%
%TCIMACRO{\dsum _{i=0}^{r-1}}%
%BeginExpansion
{\displaystyle\sum_{i=0}^{r-1}}
%EndExpansion
B\mathcal{K}_{i}(t)\frac{(-1)^{r}}{(r-1)!}\int_{t-d_{i}-h}^{t-h}%
(s-t+h+d_{i})^{r-1}\\
&  \times e_{p,r-1}^{\mathrm{T}}A\int_{3lh}^{s}\mathrm{e}^{A(s-v)}%
Bw_{1}(v)\mathrm{d}v\mathrm{d}s\\
&  +%
%TCIMACRO{\dsum _{i=0}^{r-1}}%
%BeginExpansion
{\displaystyle\sum_{i=0}^{r-1}}
%EndExpansion
B\mathcal{K}_{i}(t)\frac{(-1)^{r}}{(r-1)!}\int_{t-d_{i}-h}^{t-h}%
(s-t+h+d_{i})^{r-1}\\
&  \times e_{p,r-1}^{\mathrm{T}}Bw_{1}(s)\mathrm{d}s\\
&  +%
%TCIMACRO{\dsum _{i=0}^{r-1}}%
%BeginExpansion
{\displaystyle\sum_{i=0}^{r-1}}
%EndExpansion
B\mathcal{K}_{i}(t)w_{2}(t-d_{i}-h)+Bw_{1}(t),
\end{align*}
with $\mathcal{L}(d)$ being defined in (\ref{calL_def}).
By the variation of constants formula, the solution to (\ref{x_23}) can be
calculated as
\begin{equation}
x(t)=\mathit{\Delta}(t)x(3lh)+\mathit{\Gamma}(t),\label{x_t_cl3}%
\end{equation}
where
\begin{align}
\mathit{\Delta}(t)= &  \mathrm{e}^{A(t-3lh)}+\int_{2h+3lh}^{t}\mathrm{e}%
^{A(t-s)}B\mathit{\Theta}(s)\mathrm{d}s\mathrm{e}^{-3lhA},\nonumber\\
\mathit{\Gamma}(t)= &  \int_{3lh}^{2h+3lh}\mathrm{e}^{A(t-s)}Bw_{1}%
(s)\mathrm{d}s\nonumber\\
&  +\int_{2h+3lh}^{t}\mathrm{e}^{A(t-s)}\mathit{\Lambda}(s)\mathrm{d}%
s.\label{kama}%
\end{align}
Then for $t=3(l+1)h$, we have $x(3(l+1)h)=\mathit{\Delta}(3(l+1)h)x(3lh)+\mathit{\Gamma}(3(l+1)h)$.
According to (\ref{K_t_hat}) and (\ref{cetatt}), we obtain $\mathit{\Delta
}(3(l+1)h)=0_{rp}.$ Thus $x(3(l+1)h)=\mathit{\Gamma}(3(l+1)h),l\in%
%TCIMACRO{\U{2124} }%
%BeginExpansion
\mathbb{Z}
%EndExpansion
_{0}^{+}$. In view of (\ref{lemma_1}) and (\ref{lemma_2}), (\ref{kama})
satisfies $|\mathit{\Gamma}(t)|\leq\lambda_{1}w_{1\infty}+\lambda
_{2}w_{2\infty}$. For $t\in\lbrack3lh,3(l+1)h],l\in\mathbb{N},$ in view of
(\ref{x_t_part1}) and (\ref{x_t_cl3}), we obtain (\ref{x_iss}) with $\alpha=\max\{\beta_{1},\beta_{2}\}$.
\end{proof}

Similar to Remark \ref{rem1}, in view of (\ref{Mm}) and Lemma 1 in
\cite{emilia17auto}, we have $|\mathcal{M}^{-1}|=O(d^{-r+1})$ as $d\to0^{+}$.
Thus, $|\mathcal{M}^{-1}|$ may become unbounded as $d\to0^{+}$. From
(\ref{beta_1}) and (\ref{beta_2}), it can be observed that a very small $d$
may amplify the effect of disturbances.

\section{\label{sec3}Fixed-Time Static Output Feedback in the Presence of Output Delay}
\subsection{Large Known Slowly-Varying Delay}
Consider the linear system
\begin{equation}
\left\{
\begin{array}
[c]{rl}%
\dot{x}\left(t\right) & =Ax\left(t\right)+Bu\left(t\right),\\
y\left(t\right) & =\left\{
\begin{array}
[c]{ll}%
Cx\left(t-\tau(t)\right), & t-\tau(t)\geq0,\\
0, & t-\tau(t)<0,
\end{array}
\right.
\end{array}
\right. \label{sys_vary}%
\end{equation}
where $\tau(t)\in[0,\tau_{M}]$ is a known slowly-varying delay satisfying $\dot{\tau}(t)\leq\mu<1$
with $\tau_{M}$ being known. For conceptual simplicity, we do not consider disturbances here, but our
results are extendable to additive disturbances similar to the previous
section.

Differently from the state-feedback case, large constant measurement delays cannot be compensated by linear static output-feedback controller with a constant gain by using variation of constants formula. Below, we briefly explain why a classical predictor cannot be used in the presence of a constant output delay. For system (\ref{sys_vary}) with $\tau(t)=\tau_{0}$, consider a control law as $u(t)=KCx(t-\tau_{0})$. Using a variation of constants formula, we have
\begin{align*}u(t)  =KC\mathrm{e}^{-A\tau_{0}}x(t)-KC\int_{t-\tau_{0}}^{t}\mathrm{e}^{A(t-\tau_{0}-s) }Bu(s)\mathrm{d}s.\end{align*}Since only $Cx(t)$ is available, to design a predictor we need $C\mathrm{e}^{-A\tau_{0}}=\mathrm{e}^{-A\tau_{0}}C$, which is equivalent to $CA=AC$. This conditions restrictive and never holds for $p<n$. Therefore, this classical method cannot compensate output delays. Surprisingly, our act and wait static output feedback compensates large output delay.

%\begin{remark}
%(Constant input delay) In the constant-delay case, input delays can be
%transferred to the measurement side by a time-axis transformation.
%Specifically, consider $\dot{x}(t)=Ax(t)+Bu(t-\tau_{1})$, where
%$\tau_{1}\leq\tau_{M1}$ is a known constant input delay. By defining
%$z(t)=x(t+\tau_{1})$, we obtain $\dot{z}(t)=Az(t)+Bu(t)$. Moreover, if the
%measured output is $y(t)=Cx(t-\tau(t))$, then
%$y(t)=Cz(t-\tau_{1}-\tau(t))$. Therefore, the constant input delay is
%converted into an additional measurement delay. In particular, if
%$\tau(t)\in[0,\tau_{M}]$, then the transformed measurement delay is bounded
%by $\tau_{M1}+\tau_{M}$.
%\end{remark}

Let $h>0$. Denote
\begin{equation}
\begin{aligned}
\mathcal{W}_{\mathrm{o}}(d,t)
=M^{\mathrm{T}}
\operatorname{col}_{j=0}^{r-1}
\left\{
C\mathrm{e}^{-A(jd+\tau(t-jd-h))}
\right\},
\end{aligned}
\label{W_o_hat}%
\end{equation}
where $M$ is defined by (\ref{Mij}).

Since $\tau(t)$ is known, the matrix
$\mathcal{W}_{\mathrm{o}}(d,t)$ can be calculated online. In the general
time-varying-delay case, we assume that there exists a constant $d>0$
such that
\begin{equation}
\inf_{t\geq2h}
\sigma_{\min}
\left(
\mathcal{W}_{\mathrm{o}}(d,t)
\right)>0.
\label{W_o_uniform}%
\end{equation}
That is, $\mathcal{W}_{\mathrm{o}}(d,t)$ is invertible for all $t\geq 2h$.

It is worth noting that condition (\ref{W_o_uniform}) is naturally
satisfied for sufficiently small $d$ when the measurement delay is
slowly varying. Assume that
\begin{equation}
\left\{
\begin{aligned}
&\tau\in\mathbf{C}^{r+1},\\
&\sup_{t\geq0}\left|\tau^{(i)}(t)\right|<\infty,
\quad i=1,\ldots,r+1,\\
&\dot{\tau}(t)\leq\mu<1,\qquad t\geq0.
\end{aligned}
\right.
\label{slow_delay}
\end{equation}

Observe that (\ref{y_i_hat}) is exactly the finite-difference formula.
In the present case,
$y_0(t)=Cx(t-\tau(t))$.
Let $q=t-h$ and define
$
F_q(s)=C\mathrm{e}^{A(s-\tau(q+s))}$.
It follows from (\ref{W_o_hat}) that the $i$-th block row of
$\mathcal W_{\mathrm o}(d,t)$ can be calculated as
\[
\begin{aligned}
\left[\mathcal W_{\mathrm o}(d,t)\right]_i
=
\frac{1}{d^i}
\sum_{j=0}^{i}
(-1)^j
\binom{i}{j}
F_q(-jd),
\end{aligned}
\]
which is precisely the counterpart of (\ref{y_i_hat}) for the delayed
output.

Since (\ref{slow_delay}) holds, the derivatives of $F_q$ involved
below are uniformly bounded with respect to $q\geq h$. Hence, the
finite differences converge uniformly to the corresponding derivatives
as $d\to0^{+}$, namely,
\[
\frac{1}{d^i}
\sum_{j=0}^{i}
(-1)^j
\binom{i}{j}
F_q(-jd)
\longrightarrow
F_q^{(i)}(0),
\]
uniformly with respect to $t\geq2h$.

By repeated application of the chain rule, we obtain
$
\operatorname{col}_{i=0}^{r-1}
\{F_{t-h}^{(i)}(0)\}
=
\mathcal T_\tau(t)
W_{\mathrm o}
\mathrm{e}^{-A\tau(t-h)}$,
where $\mathcal T_\tau(t)$ is a block lower triangular matrix whose
diagonal blocks are
$
I_p,
(1-\dot{\tau}(t-h))I_p,
\ldots,
(1-\dot{\tau}(t-h))^{r-1}I_p$.
Therefore, stacking all block rows together gives
\begin{equation}
\lim_{d\rightarrow0^+}
\sup_{t\geq2h}
\left|
\mathcal W_{\mathrm o}(d,t)
-
\mathcal T_\tau(t)
W_{\mathrm o}
\mathrm{e}^{-A\tau(t-h)}
\right|
=0.
\label{W_o_limit}
\end{equation}

Moreover, since $\mathcal T_\tau(t)$ is block lower triangular,
$\det\mathcal T_\tau(t)
=
(1-\dot{\tau}(t-h))^{pr(r-1)/2}$.
By (\ref{slow_delay}),
$1-\dot{\tau}(t)\geq1-\mu>0$, and all the entries of
$\mathcal T_\tau(t)$ are uniformly bounded. Hence, both
$\mathcal T_\tau(t)$ and $\mathcal T_\tau^{-1}(t)$ are uniformly
bounded. Moreover, $W_{\mathrm o}$ is nonsingular, and
$\mathrm{e}^{-A\tau(t-h)}$ and its inverse are uniformly bounded
because $\tau(t)\in[0,\tau_M]$. Thus, using
$\sigma_{\min}(X)=1/|X^{-1}|$ (see Section 7.3 in \cite{horn13}),
there exists a constant $\gamma>0$ such that
$
\inf_{t\geq2h}
\sigma_{\min}
\left(
\mathcal T_\tau(t)W_{\mathrm o}
\mathrm{e}^{-A\tau(t-h)}
\right)
\geq\gamma$.

By (\ref{W_o_limit}), there exists a sufficiently small constant
$d_\ast>0$ such that for all $d\in(0,d_\ast)$,
\[
\sup_{t\geq2h}
\left|
\mathcal W_{\mathrm o}(d,t)
-
\mathcal T_\tau(t)W_{\mathrm o}
\mathrm{e}^{-A\tau(t-h)}
\right|
<\frac{\gamma}{2}.
\]
Therefore, by the perturbation inequality of singular values (see Corollary 7.3.5 in \cite{horn13}),
$\sigma_{\min}(X)
\geq
\sigma_{\min}(Y)-|X-Y|$,
we obtain, for all $t\geq2h$,
$
\sigma_{\min}
\left(
\mathcal W_{\mathrm o}(d,t)
\right)
\geq\frac{\gamma}{2}>0$,
which proves that condition (\ref{W_o_uniform}) holds for all
$d\in(0,d_\ast)$.

With the above preparation, we can obtain the following conclusion.

\begin{theorem}
\label{the4}
Consider system (\ref{sys_vary}) under Assumptions
\ref{ass1}--\ref{ass2}. Given $h\geq(r-1)d+\tau_{M}$, choose $d>0$ such
that (\ref{W_o_uniform}) holds. Consider the time-varying static output
feedback controller
\begin{equation}
u(t)=\sum_{i=0}^{r-1}g(t)K_{i}(t)y(t-id-h), \label{u_vary}%
\end{equation}
where $K_{i}(t)\in\mathbb{R}^{m\times p}$ for
$i=0,1,\ldots,r-1$ are the block columns of
$K(t)\in\mathbb{R}^{m\times n}$, as defined in (\ref{K_block}), satisfying
$K(t)=\bar K(t)M^{\mathrm{T}}$ with $M\in\mathbb{R}^{n\times n}$ defined
in (\ref{Mij}) and $\bar K(t)$ given by
\begin{equation}
\bar{K}(t)
=-S(t)B^{\mathrm{T}}\mathrm{e}^{A^{\mathrm{T}}(3lh-t)}
W_{\mathrm{c}}\mathrm{e}^{-A(t-h-3lh)}
\mathcal{W}_{\mathrm{o}}^{-1}(d,t), \label{k123}%
\end{equation}
where $W_{\mathrm{c}}$ and $\mathcal{W}_{\mathrm{o}}$ are defined in
(\ref{W_c_def}) and (\ref{W_o_hat}), respectively. Then the closed-loop
system consisting of (\ref{sys_vary}) and (\ref{u_vary}) satisfies
$x(t)=0$ and $u(t)=0$ for all $t\geq3h$, meaning that the system is
fixed-time stable within a fixed time $3h\ge3(r-1)d+3\tau_{M}$.
\end{theorem}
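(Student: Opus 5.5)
The plan is to repeat the act-and-wait argument of Theorem~\ref{the_2}. This time the finite-difference remainder is not needed: the delayed outputs are expressed \emph{exactly} through the state $x(3lh)$ at the start of each period, and the matrix $\mathcal{W}_{\mathrm{o}}(d,t)$ in (\ref{W_o_hat}) is constructed precisely so that it absorbs both the output delay and the finite-difference weights.

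\textbf{Step 1 (wait phase).} Fix $l\in\mathbb{Z}_0^+$. For $t\in[3lh,3lh+2h]$ we have $g(t)=0$, hence $u(t)=0$ and
\[
x(t)=\mathrm{e}^{A(t-3lh)}x(3lh).
\]

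\textbf{Step 2 (window check).} For $t\in[3lh+2h,3(l+1)h]$ and $j=0,\ldots,r-1$, set $s_j=t-jd-h-\tau(t-jd-h)$. I will verify that $s_j\in[3lh,3lh+2h]$.
\begin{itemize}
\item Upper bound: $s_j\le t-h\le 3lh+2h$, since $\tau\geq0$.
\item Lower bound: $s_j\ge 3lh+2h-(r-1)d-h-\tau_M\ge 3lh$, which is exactly where the hypothesis $h\geq(r-1)d+\tau_M$ enters.
\end{itemize}
In particular $s_j\geq0$, so the measurement is the genuine one, $y=Cx(s_j)$, and not the value $0$ assigned for negative arguments. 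Combining with Step 1,
\[
y(t-jd-h)=C\mathrm{e}^{-A(jd+\tau(t-jd-h))}\,\mathrm{e}^{A(t-h-3lh)}x(3lh).
\]
I expect this bookkeeping to be the only delicate point, especially for $l=0$ and for the fact that $\tau$ is evaluated at the shifted time $t-jd-h$. That choice of argument is exactly what the definition (\ref{W_o_hat}) accounts for.

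\textbf{Step 3 (act phase and the control law).} Stacking over $j$ and using $K(t)=\bar K(t)M^{\mathrm{T}}$ gives
\[
u(t)=K(t)\operatorname{col}_{j}\{y(t-jd-h)\}=\bar K(t)\mathcal{W}_{\mathrm{o}}(d,t)\mathrm{e}^{A(t-h-3lh)}x(3lh).
\]
Since $t\geq 2h$, condition (\ref{W_o_uniform}) makes $\mathcal{W}_{\mathrm{o}}(d,t)$ invertible, so $\bar K(t)$ in (\ref{k123}) is well defined. Substituting (\ref{k123}) cancels $\mathcal{W}_{\mathrm{o}}$ and the exponentials $\mathrm{e}^{\pm A(t-h-3lh)}$, leaving
\[
u(t)=-S(t)B^{\mathrm{T}}\mathrm{e}^{A^{\mathrm{T}}(3lh-t)}W_{\mathrm{c}}\,x(3lh).
\]
Thus the closed loop on this interval is an open-loop ODE driven by a known signal.

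\textbf{Step 4 (deadbeat at the period end).} By variation of constants, using $3h$-periodicity of $S$ and the substitution $t=3lh+v$,
\[
x(3(l+1)h)=\mathrm{e}^{3Ah}\Big(I_n-\int_{2h}^{3h}\mathrm{e}^{-Av}BS(v)B^{\mathrm{T}}\mathrm{e}^{-A^{\mathrm{T}}v}\mathrm{d}v\,W_{\mathrm{c}}\Big)x(3lh)=0,
\]
by the definition (\ref{W_c_def}) of $W_{\mathrm{c}}$. Invertibility of $W_{\mathrm{c}}$ holds by controllability, as noted earlier.

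\textbf{Step 5 (conclusion).} Taking $l=0$ gives $x(3h)=0$. Induction on $l\geq1$ then gives the following on each subsequent period $[3lh,3(l+1)h]$:
\begin{itemize}
\item On the wait phase, $x(t)=\mathrm{e}^{A(t-3lh)}\cdot0=0$ and $u=0$.
\item On the act phase, $u(t)$ is linear in $x(3lh)=0$, so $u=0$ and hence $x=0$.
\end{itemize}
Therefore $x(t)=0$ and $u(t)=0$ for all $t\geq3h$, and $3h\ge3(r-1)d+3\tau_M$ follows directly from the hypothesis on $h$.
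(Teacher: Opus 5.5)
Your proposal is correct and follows the paper's proof essentially step for step: the wait-phase solution, the window check $t-jd-h-\tau(t-jd-h)\in[3lh,3lh+2h]$ from $h\ge(r-1)d+\tau_M$, the exact representation of the delayed outputs as $\mathcal{W}_{\mathrm{o}}(d,t)\mathrm{e}^{A(t-h-3lh)}x(3lh)$, and the cancellation through $W_{\mathrm{c}}$ giving $x(3(l+1)h)=0$. The only differences are cosmetic: you substitute $\bar K(t)$ directly to obtain the open-loop signal $u(t)=-S(t)B^{\mathrm{T}}\mathrm{e}^{A^{\mathrm{T}}(3lh-t)}W_{\mathrm{c}}x(3lh)$ instead of computing the transition matrix $\mathit{\Delta}(3(l+1)h)$, and you check explicitly that $s_j\ge0$, so the zero-output branch of (\ref{sys_vary}) is never used, which the paper leaves implicit.
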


\begin{proof}
For $t\in[3lh,2h+3lh]$, $l\in\mathbb{Z}_{0}^{+}$, in view of $g(t)=0$,
we have $\dot{x}(t)=Ax(t)$, whose solution is
\begin{equation}
x(t)=\mathrm{e}^{A(t-3lh)}x(3lh). \label{x_1}%
\end{equation}
For $t\in[3lh+2h,3(l+1)h]$, $l\in\mathbb{Z}_{0}^{+}$, we have $g(t)=1$.
Since $h\geq(r-1)d+\tau_{M}$, for $i=0,1,\ldots,r-1$,
$t-id-h-\tau(t-id-h)\in[3lh,3lh+2h]$.
It then follows from (\ref{sys_vary}) and (\ref{x_1}) that
\begin{align*}
y(t-id-h)
=C\mathrm{e}^{A(t-id-h-\tau(t-id-h)-3lh)}x(3lh).
\end{align*}
Therefore, by $K(t)=\bar K(t)M^{\mathrm{T}}$ and
(\ref{W_o_hat}), the closed-loop system can be represented as
\begin{align}
\dot{x}(t)
={}&Ax(t)+B\bar K(t)\mathcal{W}_{\mathrm{o}}(d,t)
\mathrm{e}^{A(t-h-3lh)}x(3lh). \label{x_dot12}%
\end{align}
By the variation-of-constants formula, the solution of (\ref{x_dot12}) is
\begin{equation}
x(t)=\mathit{\Delta}(t)x(3lh), \label{x_2}%
\end{equation}
where
\begin{align*}
\mathit{\Delta}(t)
={}&\mathrm{e}^{A(t-3lh)}
+\int_{3lh+2h}^{t}\mathrm{e}^{A(t-v)}
B\bar K(v)\mathcal{W}_{\mathrm{o}}(d,v)\\
&\qquad\times
\mathrm{e}^{A(v-h-3lh)}\mathrm{d}v.
\end{align*}
For $t=3(l+1)h$, substituting (\ref{k123}) into the above equation yields $\mathit{\Delta}(3(l+1)h)=0$
where the definition of $W_{\mathrm{c}}$ in (\ref{W_c_def}) has been used.
Thus, $x(3(l+1)h)=0$ for all $l\in\mathbb{Z}_{0}^{+}$. For
$t\in[3h,6h]$, it follows from (\ref{x_1}) and (\ref{x_2}) that $x(t)=0$.
Hence, from (\ref{sys_vary}) and (\ref{u_vary}), we have $u(t)=0$.
Repeating the above process gives $x(t)=0$ and $u(t)=0$ for all $t\geq3h$.
\end{proof}

For the chain of integrators, i.e., system (\ref{sys_vary}) with
$(A,B,C)$ defined in (\ref{A})--(\ref{C}) under $p=1$, we have $n=r$, and
$A_i=0$, $i=0,1,\ldots,n-1$. 
Then
\[
C\mathrm{e}^{-A\theta_j(d,t)}
=
\left[
1,-\theta_j(d,t),
\frac{\theta_j^2(d,t)}{2!},\ldots,
\frac{(-\theta_j(d,t))^{n-1}}{(n-1)!}
\right],
\]
where for $j=0,1,\ldots,n-1$, $\theta_j(d,t)=jd+\tau(t-jd-h)$. Hence,
$
\operatorname{col}_{j=0}^{n-1}
\left\{
C\mathrm{e}^{-A\theta_j(d,t)}
\right\}
$
is a scaled Vandermonde matrix and is nonsingular whenever
$\theta_0(d,t),\ldots,\theta_{n-1}(d,t)$ are pairwise distinct.

In particular, if (\ref{slow_delay}) holds, then, for $j=0,1,\ldots,n-2$,
$\theta_{j+1}(d,t)-\theta_j(d,t)
=d+\tau(t-(j+1)d-h)
-\tau(t-jd-h)
\geq(1-\mu)d>0$.
Moreover,
$(1-\mu)d
\leq
\theta_{j+1}(d,t)-\theta_j(d,t)$,
and
$
jd\leq\theta_j(d,t)\leq jd+\tau_M$.
Thus, for every fixed $d>0$, the nodes $\theta_j(d,t)$ belong to a bounded set and
are uniformly separated with respect to $t\geq2h$. Consequently, the
scaled Vandermonde matrix and its inverse are uniformly bounded with
respect to $t\geq2h$. Since $M^{\mathrm T}$ is nonsingular, it follows from (\ref{W_o_hat}) that
$
\inf_{t\geq2h}
\sigma_{\min}
\left(
\mathcal W_{\mathrm o}(d,t)
\right)>0
$
for every fixed $d>0$. Therefore, condition
(\ref{W_o_uniform}) holds for every fixed $d>0$, and we obtain the
following corollary.

\begin{corollary}
\label{coro1}
Consider the chain of integrators, i.e., system (\ref{sys_vary}) with
$(A,B,C)$ defined in (\ref{A})--(\ref{C}) under $p=1$, and hence $n=r$, and
$A_{i}=0$, $i=0,1,\ldots,n-1$. Assume that (\ref{slow_delay}) holds.
For any $d>0$, choose $h\geq(n-1)d+\tau_M$. Consider the time-varying
static output feedback controller (\ref{u_vary}). Then the corresponding
closed-loop system is fixed-time stable within a fixed time $3h$.
\end{corollary}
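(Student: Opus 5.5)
The plan is to derive Corollary~\ref{coro1} from Theorem~\ref{the4} by checking its hypotheses for the chain of integrators. For $A$, $B$, $C$ in (\ref{A})--(\ref{C}) with $p=1$ and $A_i=0$, the matrix $A$ is the nilpotent shift and $B=e_{1,n-1}\bar B$ with $\bar B\neq0$. Hence the following hold.
\begin{itemize}
\item $(A,B)$ is controllable.
\item $CA^{i}B=0$ for $i\le n-2$ and $CA^{n-1}B=\bar B\neq0$, so the relative degree is $r=n$ and $n=pr$; Assumption~\ref{ass1} therefore holds.
\item $W_{\mathrm{o}}=I_n$, so Assumption~\ref{ass2} holds.
\end{itemize}
The only remaining hypothesis of Theorem~\ref{the4} is (\ref{W_o_uniform}) for the given, arbitrary $d>0$. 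Once it is verified, Theorem~\ref{the4} with $h\ge(n-1)d+\tau_M$ gives $x(t)=0$ and $u(t)=0$ for $t\ge3h$.

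To verify (\ref{W_o_uniform}), I would use the factorization $\mathcal W_{\mathrm o}(d,t)=M^{\mathrm T}V(t)$, where
\[
V(t)=\operatorname{col}_{j=0}^{n-1}\{C\mathrm e^{-A\theta_j(d,t)}\},\qquad \theta_j(d,t)=jd+\tau(t-jd-h).
\]
Since $A$ is nilpotent, $C\mathrm e^{-A\theta}=[1,-\theta,\theta^2/2!,\ldots,(-\theta)^{n-1}/(n-1)!]$. Therefore
\[
V(t)=\mathcal V(-\theta_0,\ldots,-\theta_{n-1})\,\mathrm{diag}(1,1/1!,\ldots,1/(n-1)!),
\]
where $\mathcal V$ denotes the Vandermonde matrix, and
\[
|\det V(t)|=\prod_{k}\frac{1}{k!}\prod_{i<j}|\theta_j-\theta_i|.
\]

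Next comes a uniform separation bound. By the mean value theorem and $\dot\tau\le\mu<1$,
\[
\theta_{j+1}-\theta_j=d-\bigl(\tau(t-jd-h)-\tau(t-(j+1)d-h)\bigr)\ge d-\mu d=(1-\mu)d .
\]
For $t-jd-h<0$, $\tau$ is only defined on $\mathbb R_{\ge0}$; I would either extend $\tau$ constantly, or note that only $t\ge2h$ matters, where $t-jd-h\ge h-(n-1)d\ge\tau_M\ge0$. Ordering then gives $|\theta_j-\theta_i|\ge(j-i)(1-\mu)d$, so $|\det V(t)|$ is bounded below by a positive constant independent of $t$. Also $0\le\theta_j\le(n-1)d+\tau_M$, so the entries of $V(t)$ are uniformly bounded.

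Via the adjugate, $|V^{-1}(t)|\le c_1/c_2$ uniformly, where $c_1$ bounds the cofactors and $c_2$ is the determinant bound. Hence $\sigma_{\min}(V(t))=1/|V^{-1}(t)|$ is uniformly positive. Since $M^{\mathrm T}$ is a constant nonsingular matrix (upper triangular with diagonal blocks $\pm d^{-i}$), $\sigma_{\min}(\mathcal W_{\mathrm o})\ge\sigma_{\min}(M^{\mathrm T})\sigma_{\min}(V)>0$ uniformly in $t\ge2h$.

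The main obstacle is making the separation bound uniform in $t$, so that the infimum in (\ref{W_o_uniform}) is genuinely positive and not just pointwise; this is exactly where $\mu<1$ and the boundedness of $\tau$ enter. The smoothness requirements in (\ref{slow_delay}) beyond $\dot\tau\le\mu$ are not needed for this step. If Theorem~\ref{the4} is read as requiring only (\ref{W_o_uniform}), they are only needed to match the standing assumption.
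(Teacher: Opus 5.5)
Your proposal is correct and follows essentially the same route as the paper: reduce to Theorem~\ref{the4} by checking Assumptions~\ref{ass1}--\ref{ass2} and (\ref{W_o_uniform}), factor $\mathcal W_{\mathrm o}(d,t)=M^{\mathrm T}\operatorname{col}_{j}\{C\mathrm e^{-A\theta_j(d,t)}\}$ as a constant nonsingular matrix times a scaled Vandermonde matrix, and use the uniform separation $\theta_{j+1}-\theta_j\ge(1-\mu)d$ together with $jd\le\theta_j\le jd+\tau_M$ to obtain a uniformly bounded inverse. Your explicit determinant/adjugate bound and the check that all delay arguments are nonnegative for $t\ge 2h$ spell out what the paper leaves implicit; one trivial slip is that $M^{\mathrm T}$ is lower (not upper) triangular, which does not affect its nonsingularity.
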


\subsection{Periodic Sampled-Data Measurements}

In this part, we assume that only sampled measurements $y(t_{k})$ are
available to the controller, where $t_{k}=kd$, $k\in\mathbb{Z}_{0}^{+}$. Equivalently, the held sampled output can be written as
$y_s(t)=Cx(t-\tau_s(t))$, where
$\tau_s(t)=t-t_k\in[0,d)$ for $t\in[t_k,t_{k+1})$.
Thus, $\tau_s(t)$ is a periodic sawtooth measurement delay. For
later use, define 
\begin{equation}
P(d,t)=[P_{0}^{\mathrm{T}}(d,t),P_{1}^{\mathrm{T}}(d,t),\ldots,
P_{r-1}^{\mathrm{T}}(d,t)]^{\mathrm{T}}\in\mathbb{R}^{n\times n},
\label{P_def}
\end{equation}
 with $P_{i}(d,t)\in\mathbb{R}^{p\times n}$ given by
\begin{align}
P_{i}(d,t)=  &  -\varphi_{i}(id) CA^{i+1}\int_{t_{k}}^{t}\mathrm{e}^{A(s-t-id)
}\mathrm{d}s\nonumber\\
&  +\int_{t_{k}-h}^{t-h}\int_{0}^{id}\varphi_{i}^{^{\prime}}(s) CA^{i+1}%
\mathrm{e}^{A(u-s-t+h) }\mathrm{d}s\mathrm{d}u\nonumber\\
&  -\int_{0}^{id}\varphi_{i}(s) CA^{i+1}\mathrm{e}^{-As}\mathrm{d}s.
\label{P_i}%
\end{align}
We first prove
that there exists a sufficiently small positive constant $d_{\ast}$, such that
for $d\in(0,d_{\ast})$, $W_{\mathrm{o}}+P(d,t)$ is invertible. For the last
term in $P_{i}(d,t)$, we have
$
\lim_{d\rightarrow0^{+}}\int_{0}^{id}\varphi_{i}(s) CA^{i+1}\mathrm{e}%
^{-As}\mathrm{d}s =0$.

For the remaining two terms, since $t\in[t_{k},t_{k+1})$, we have $0\leq
t-t_{k}<d$. Hence, the first term in $P_{i}(d,t)$ also converges uniformly to
zero as $d\to0^{+}$. Moreover, the second term in $P_{i}(d,t)$ converges
uniformly to zero as $d\to0^{+}$ because the outer integration interval has
length less than $d$, while the inner integral is uniformly bounded.
Therefore, $\lim_{d\to0^{+}}\sup_{t\geq0}|P(d,t)|=0$. Since $W_{\mathrm{o}}$
is invertible, there exists a sufficiently small constant $d_{\ast}>0$ such
that for all $d\in(0,d_{\ast})$ and all $t\geq0$, $W_{\mathrm{o}}+P(d,t)$ is invertible.

With the above definition, we can state the following result.
\begin{theorem}
\label{sample_data}Consider the system (\ref{sys_2}) under Assumptions \ref{ass1}-\ref{ass2} and
$w_{1}(t)=0,w_{2}(t)=0$. Given $h=Nd$, $N\in\mathbb{N}$, and $N\ge r-1$,
choose small enough $d>0$ such that $W_{\mathrm{o}}+P(d,t)$ is invertible for all $t\geq0$. Consider the
time-varying static output feedback controller
\begin{equation}
\begin{aligned}
u(t) &=\sum_{i=0}^{r-1} g(t) K_{i}(t)\, y(t_{k-i}-h) \\
     &=\sum_{i=0}^{r-1} g(t) K_{i}(t)\, y(t_{k-i-N}), \quad t\in[t_{k}%
,t_{k+1}),
\end{aligned}
\label{u_control_5}
\end{equation}
where $t_{k}=kd$, $K_{i}(t)\in\mathbb{R}^{m\times p}$ for $i=0,1,\ldots,r-1$ are the block
columns of $K(t)\in\mathbb{R}^{m\times n}$, as defined in (\ref{K_block}), satisfying
$K(t)=\bar K(t) M^{\mathrm{T}}$ with $M\in\mathbb{R}^{n\times n}$ defined in
(\ref{Mij}) and $\bar K(t)$ given by
\begin{equation}
\bar{K}(t)=-S(t)B^{\mathrm{T}}\mathrm{e}^{A^{\mathrm{T}}(3lh-t)}W_{\mathrm{c}%
}\mathrm{e}^{-A(t-h-3lh)}(W_{\mathrm{o}}+P(d,t))^{-1}, \label{K_hat1}%
\end{equation}
where $W_{\mathrm{c}}$ and $W_{\mathrm{o}}$ are defined in (\ref{W_c_def}) and
(\ref{Woo}), respectively. Then the closed-loop system (\ref{sys_2}) and
(\ref{u_control_5}) satisfies the condition that $x(t)  =0$ and
$u(t)=0$, for $t\geq3h$, meaning that the system is fixed-time stable.
\end{theorem}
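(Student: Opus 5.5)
The proof follows the same act-and-wait pattern as Theorems \ref{the_2} and \ref{the4}. On each period $[3lh,3(l+1)h]$ we show that the state at the end of the period is a matrix $\mathit{\Delta}(3(l+1)h)$ times $x(3lh)$. The gain (\ref{K_hat1}) is built so that this matrix vanishes.

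\textbf{Wait phase.} For $t\in[3lh,3lh+2h]$ we have $g(t)=0$, so $x(t)=\mathrm{e}^{A(t-3lh)}x(3lh)$ exactly as in (\ref{x_1}).

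\textbf{Act phase: locating the samples.} For $t\in[t_k,t_{k+1})\subset[3lh+2h,3(l+1)h]$, the samples used in (\ref{u_control_5}) are $t_{k-i-N}=t_k-id-h$. Since $h=Nd$, the interval endpoints $3lh$ and $3lh+2h$ are sampling instants. Because $N\ge r-1$, i.e. $(r-1)d\le h$, all these samples lie in $[3lh,3lh+2h]$, where the open-loop formula holds. Hence
\[
y(t_{k-i}-h)=C\mathrm{e}^{A(t_k-id-h-3lh)}x(3lh),
\]
and the closed loop reads
\[
\dot x(t)=Ax(t)+B\sum_i K_i(t)\,C\mathrm{e}^{A(t_k-id-h-3lh)}x(3lh).
\]

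\textbf{Act phase: matching to $W_{\mathrm o}+P(d,t)$.} We use $K(t)=\bar K(t)M^{\mathrm T}$. Since $M^{\mathrm T}$ applied to the stacked samples produces the finite differences (\ref{y_i_hat}) evaluated at $t_k-h$, Lemma 1 (formula (\ref{y_h})) applied at $t_k-h$ gives
\[
\tilde y_i(t_k-h)=CA^i x(t_k-h)-\int_{t_k-h-id}^{t_k-h}\varphi_i(t_k-h-s)\,CA^{i+1}x(s)\,\mathrm{d}s .
\]
Inside the wait interval, $x(s)=\mathrm{e}^{A(s-3lh)}x(3lh)$. We then rewrite everything relative to the current time $t$ rather than $t_k$, i.e. we express $\tilde y_i(t_k-h)$ as $\bigl(CA^i+P_i(d,t)\bigr)\mathrm{e}^{A(t-h-3lh)}x(3lh)$. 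Two ingredients do this:
\begin{itemize}
\item $CA^i x(t_k-h)=CA^i x(t-h)-\int_{t_k-h}^{t-h}CA^{i+1}x(s)\,\mathrm{d}s$;
\item the $t_k$-dependence of the remainder integral is moved to $t$ by differentiating in the shift variable and integrating over $[t_k-h,t-h]$. This produces the $\varphi_i(id)$ boundary term and the $\varphi_i'$ double integral in (\ref{P_i}); the last term of (\ref{P_i}) is the remainder evaluated at $t$.
\end{itemize}
Note that $\varphi_i(0)$ enters and should cancel against the first ingredient. This step yields
\[
\sum_i K_i(t)\,y(t_{k-i}-h)=\bar K(t)\bigl(W_{\mathrm o}+P(d,t)\bigr)\mathrm{e}^{A(t-h-3lh)}x(3lh).
\]

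\textbf{Conclusion.} Substituting (\ref{K_hat1}), the factor $(W_{\mathrm o}+P)$ cancels. Variation of constants then gives
\[
x(3(l+1)h)=\mathrm{e}^{3Ah}\Bigl(I-W_{\mathrm c}^{-1}W_{\mathrm c}\Bigr)x(3lh)=0,
\]
using (\ref{W_c_def}) and the periodicity of $S$ with the change of variable $v\mapsto v-3lh$. Since $x(3h)=0$, the same cancellation repeats on every later period, and induction gives $x(t)=0$ and $u(t)=0$ for $t\ge3h$. Here $u$ vanishes because all the samples it uses are then zero.

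\textbf{Main obstacle.} The delicate step is the act-phase identity above: verifying that the sampled finite-difference remainder, shifted from $t_k$ to $t$, reproduces exactly the three terms of $P_i(d,t)$, including the signs and the $\varphi_i(id)$ boundary contribution. I would first attempt this by writing the remainder as $\int_0^{id}\varphi_i(s)\,CA^{i+1}\mathrm{e}^{A(\theta-s)}\,\mathrm{d}s$ with $\theta=t_k-h-3lh$. I would then differentiate in $\theta$, integrate by parts in $s$, and integrate from $t_k-h$ to $t-h$. If the signs do not match, I would fall back to treating $P_i$ as defined and checking the identity directly at $t=t_k$, where the first two terms vanish.
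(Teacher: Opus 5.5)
Your proposal is correct and follows essentially the same route as the paper. The paper writes $\tilde y_i(t_k-h)=\tilde y_i(t-h)-\int_{t_k-h}^{t-h}\dot{\tilde y}_i(s)\,\mathrm{d}s$ and applies (\ref{y_h}) at $t-h$, which gives $W_{\mathrm{o}}$ and the last term of $P_i$; it then differentiates the integrated-by-parts form $\tilde y_i(t)=\varphi_i(id)y_0^{(i)}(t-id)-\int_0^{id}\varphi_i^{\prime}(s)y_0^{(i)}(t-s)\,\mathrm{d}s$, which uses $\varphi_i(0)=1$ and is exactly your shift-plus-integration-by-parts computation with the cancellation you anticipated. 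Your primary plan reproduces all three terms of (\ref{P_i}) with the correct signs, so the fallback is not needed; note also that checking the identity only at $t=t_k$ would not suffice on its own, since the identity is used for every $t\in[t_k,t_{k+1})$ inside the variation-of-constants integral.
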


\begin{proof}
In view of (\ref{y_i_hat}) and $K(t)=\bar{K}(t)M^{\mathrm{T}}$, the controller
(\ref{u_control_5}) can be represented as
\[
u(t)=%
%TCIMACRO{\dsum _{i=0}^{r-1}}%
%BeginExpansion
{\displaystyle\sum_{i=0}^{r-1}}
%EndExpansion
g(t)\bar{K}_{i}(t)\tilde{y}_{i}(t_{k}-h).
\]
Similar to \cite{anton18auto}, from (\ref{y_0_hat}) and (\ref{y_h}), it follows that for $i=0,1,\dots,r-1$,
\begin{align*}
\tilde{y}_{i}(t_{k})=  &  \tilde{y}_{i}(t)-\int_{t_{k}}^{t}\dot{\tilde{y}}%
_{i}(s)\mathrm{d}s=y_{0}^{(i)}(t)\\
&  -\int_{t-id}^{t}\varphi_{i}(t-s)y_{0}^{(i+1)}(s)\mathrm{d}s-\int_{t_{k}%
}^{t}\dot{\tilde{y}}_{i}(s)\mathrm{d}s.
\end{align*}
In view of (\ref{y_h}), the closed-loop system (\ref{sys_2}) and (\ref{u_control_5}) can be
represented as
\[
\dot{x}(t)=Ax(t)+B%
%TCIMACRO{\dsum _{i=0}^{r-1}}%
%BeginExpansion
{\displaystyle\sum_{i=0}^{r-1}}
%EndExpansion
g(t)\bar{K}_{i}(t)\left(  y_{0}^{(i)}(t-h)+\delta_{i}(t)+\kappa_{i}(t)\right)
,
\]
where $\delta_{i}(t) =-\int_{t_{k}-h}^{t-h}\dot{\tilde{y}}_{i}(s)\mathrm{d}s$ and
\begin{align*}
\kappa_{i}(t)  =-\int_{t-id-h}^{t-h}\varphi_{i}(t-h-s)y_{0}^{(i+1)}%
(s)\mathrm{d}s.
\end{align*}
For $t\in\lbrack3lh,2h+3lh]$, $l\in%
%TCIMACRO{\U{2124} }%
%BeginExpansion
\mathbb{Z}
%EndExpansion
_{0}^{+}$, in view of $g(t)=0$, we have $\dot{x}(t)=Ax(t)$, whose solution can
be calculated as
\begin{equation}
x(t)=\mathrm{e}^{A(t-3lh)}x(3lh). \label{x_solu_5}%
\end{equation}
For $t\in\lbrack3lh+2h,3(l+1)h]$, $l\in%
%TCIMACRO{\U{2124} }%
%BeginExpansion
\mathbb{Z}
%EndExpansion
_{0}^{+}$, by noting $g(t)=1$ and (\ref{y_norm_def}), it follows
that
\begin{equation}
\dot{x}(t)=Ax(t)+B%
%TCIMACRO{\dsum _{i=0}^{r-1}}%
%BeginExpansion
{\displaystyle\sum_{i=0}^{r-1}}
%EndExpansion
\bar{K}_{i}(t)( CA^{i}x(t-h)+\delta_{i}(t)+\kappa_{i}(t)) . \label{x_dot_3}%
\end{equation}
Since $t-id-h\in\lbrack3lh,2h+3lh]$ and $t-h\in\lbrack3lh,2h+3lh]$, from
(\ref{y_norm_def}) and (\ref{x_solu_5}), we can obtain
\begin{align*}
&  \int_{t-id-h}^{t-h}\varphi_{i}(t-h-s)y_{0}^{(i+1)}(s)\mathrm{d}s\\
=  &  \int_{t-id-h}^{t-h}\varphi_{i}(t-h-s)CA^{i+1}\mathrm{e}^{A(s-3lh)}%
\mathrm{d}sx(3lh).
\end{align*}
According to (\ref{y_h}), we obtain
\begin{align*}
& \tilde{y}_{i}(t)= y_{0}^{(i)}(t)-\int_{t-id}^{t}\varphi_{i}(t-s)y_{0}%
^{(i+1)}(s)\mathrm{d}s\\
=  &  y_{0}^{(i)}(t)+\varphi_{i}(s)y_{0}^{(i)}(t-s)|_{0}^{id}-\int_{0}%
^{id}y_{0}^{(i)}(t-s)\mathrm{d}\varphi_{i}(s)\\
=  &  \varphi_{i}(id)y_{0}^{(i)}(t-id)-\int_{0}^{id}\varphi_{i}^{\prime
}(s)y_{0}^{(i)}(t-s)\mathrm{d}s,
\end{align*}
which, together with (\ref{y_norm_def}) and (\ref{x_solu_5}) and $t-d-h\leq t_{k}-h\leq t-h$, yields
\begin{align*}
&  \int_{t_{k}-h}^{t-h}\dot{\tilde{y}}_{i}(s)\mathrm{d}s = \int_{t_{k}%
-h}^{t-h}\varphi_{i}(id)y_{0}^{(i+1)}(s-id)\mathrm{d}s\\
&  -\int_{t_{k}-h}^{t-h}\int_{0}^{id}\varphi_{i}^{\prime}(s)y_{0}%
^{(i+1)}(u-s)\mathrm{d}s\mathrm{d}u\\
=  &  \int_{t_{k}-h}^{t-h}\varphi_{i}(id)CA^{i+1}\mathrm{e}^{A(s-id-3lh)}%
\mathrm{d}sx(3lh)\\
&  -\int_{t_{k}-h}^{t-h}\int_{0}^{id}\varphi_{i}^{\prime}(s)CA^{i+1}%
\mathrm{e}^{A(u-s-3lh)}\mathrm{d}s\mathrm{d}ux(3lh).
\end{align*}
Then (\ref{x_dot_3}) can be continued as
\[
\dot{x}(t)=Ax(t)+B\bar{K}(t)(W_{\mathrm{o}%
}+P(d,t))\mathrm{e}^{A(t-h-3lh)}x(3lh),
\]
where $P(d,t)$ is defined in (\ref{P_def}), whose solution can be calculated
as $x(t)=\mathit{\Delta}(t)x(3lh)$,
where%
\begin{align*}
  &\mathit{\Delta}(t)  =\mathrm{e}^{A(t-3lh)}\\
&  +\int_{2h+3lh}^{t}\mathrm{e}^{A(t-s)}B\bar{K}(s)(W_{\mathrm{o}%
}+P(d,s))\mathrm{e}^{A(s-h-3lh)}\mathrm{d}s.
\end{align*}
For $t=3(l+1)h$, we have $x(3(l+1)h)=\mathit{\Delta}(3(l+1)h)x(3lh)$. In view
of (\ref{K_hat1}), we obtain $\mathit{\Delta}(3(l+1)h)=0_{n}$, which implies
that $x(3(l+1)h)=0$ for $l\in%
%TCIMACRO{\U{2124} }%
%BeginExpansion
\mathbb{Z}
%EndExpansion
_{0}^{+}$. Similar to the proof of Theorem \ref{the4}, we obtain that $x(t)=0$
and $u(t)=0$, for $t\geq3h$.
\end{proof}

For the chain of integrators, i.e., system (\ref{sys_2}) with
$(A,B,C)$ defined in (\ref{A})--(\ref{C}) under $p=1$ and
$A_i=0$, $i=0,1,\ldots,n-1$, and
$w_1(t)=0,w_2(t)=0$, we have $n=r$. Moreover,
$CA^i=e_{1,i}^{\mathrm T}$ for $i=0,1,\ldots,n-1$,
and $CA^n=0$.
Furthermore, for any scalar $s$, $CA^{i+1}\mathrm{e}^{-As}$ has its first
$i+1$ entries equal to zero. Hence every term in
$P_{i}(d,t)$ defined in (\ref{P_i}), which contains $CA^{i+1}\mathrm{e}%
^{A(\cdot)}$, has zero entries in the first $i+1$ entries. Therefore, for
$i=0,1,\ldots,n-1$, $P_{i}(d,t)=[0_{1\times(i+1)},\ast,\ldots,\ast]
\in\mathbb{R}^{1\times n}$, and, since $CA^{n}=0$, $P_{n-1}(d,t)=0_{1\times
n}.$ It follows that $P(d,t)$ is strictly upper triangular. Clearly,
$W_{\mathrm{o}}+P(d,t)=I_{n}+P(d,t)$. Therefore, for any $d>0$, $W_{\mathrm{o}%
}+P(d,t)$ is invertible, which leads to the following corollary.

\begin{corollary}
Consider the chain of integrators, i.e., system (\ref{sys_2}) with $(A,B,C)$
defined in (\ref{A})--(\ref{C}) under $p=1$, and hence $n=r$, and
$A_{i}=0$, $i=0,1,\ldots,n-1$,
and $w_{1}(t)=0,w_{2}(t)=0$. Given any $d>0$ and $h=Nd,N\in\mathbb{N}$, $N\geq n-1$,
consider the time-varying static output feedback controller
(\ref{u_control_5}) under discrete-time measurements with $t_k=kd$. Then the corresponding closed-loop system is fixed-time stable within a fixed time $3h$.
\end{corollary}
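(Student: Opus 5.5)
The plan is to obtain this corollary as a direct specialization of Theorem~\ref{sample_data}. Its only nontrivial hypothesis, the invertibility of $W_{\mathrm{o}}+P(d,t)$ for all $t\ge0$, holds here for every $d>0$ and not just for small $d$. Accordingly, I would first verify Assumptions~\ref{ass1}--\ref{ass2} for the chain of integrators. With $p=1$, $n=r$, $A$ the nilpotent shift, $B=e_{1,n-1}$ (taking $\bar B=1$ or any nonzero scalar) and $C=e_{1,0}^{\mathrm T}$, the pair $(A,B)$ is controllable, since its controllability matrix is an anti-identity up to the scaling $\bar B$. Moreover $CA^{i}=e_{1,i}^{\mathrm T}$, so $W_{\mathrm{o}}=I_n$ is invertible. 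The relative degree condition (\ref{cab}) also holds with $r=n$, because $CA^{i}B=0$ for $i\le n-2$ and $CA^{n-1}B=\bar B\neq0$.

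Next I would establish the structural claim already indicated before the statement: $P(d,t)$ is strictly upper triangular for every $d>0$ and $t\ge0$. Since $A$ is the shift, $CA^{i+1}\mathrm{e}^{A\theta}=e_{1,i+1}^{\mathrm T}\mathrm{e}^{A\theta}=\sum_{k\ge0}\frac{\theta^k}{k!}e_{1,i+1+k}^{\mathrm T}$ for any scalar $\theta$. This row vector vanishes in its first $i+1$ entries, and it vanishes identically when $i=n-1$ because $CA^{n}=0$. Each of the three terms in (\ref{P_i}) is a (double) integral of such a vector multiplied by a scalar weight ($\varphi_i(id)$, $\varphi_i'(s)$, or $\varphi_i(s)$). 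Integration preserves the zero pattern, so row $i$ of $P(d,t)$ has zeros in columns $0,\dots,i$. Consequently $W_{\mathrm{o}}+P(d,t)=I_n+P(d,t)$ is unit upper triangular with determinant $1$, and is therefore invertible for all $d>0$ and all $t$. The inverse also remains bounded in $t$: $P$ is bounded, since $t-t_k<d$ and all integration ranges have length at most $d$ or $id$, and the inverse of a unit upper triangular matrix is polynomial in the entries of $P$. This makes the gain (\ref{K_hat1}) well defined and bounded.

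Finally I would apply Theorem~\ref{sample_data} with $N\ge n-1=r-1$ and $h=Nd$. The small-$d$ requirement in that theorem is used only to guarantee invertibility of $W_{\mathrm{o}}+P(d,t)$, in the spirit of the earlier remark that any $d$ preserving nonsingularity is admissible. Its proof, namely the representation $\dot x=Ax+B\bar K(t)(W_{\mathrm{o}}+P(d,t))\mathrm{e}^{A(t-h-3lh)}x(3lh)$ on the active window followed by $\mathit{\Delta}(3(l+1)h)=0$ through the definition of $W_{\mathrm{c}}$, goes through verbatim. It yields $x(t)=0$ and $u(t)=0$ for $t\ge3h$.

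The main point to check carefully is that nothing else in the proof of Theorem~\ref{sample_data} silently uses smallness of $d$. The candidate places are the inclusions $t-id-h,\ t_k-h\in[3lh,3lh+2h]$ and the use of $x(s)=\mathrm{e}^{A(s-3lh)}x(3lh)$ there. These rely only on $h=Nd\ge(r-1)d$ together with $t-t_k<d\le h$, so they hold for any $d>0$.
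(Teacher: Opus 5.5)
Your proposal is correct and follows the paper's own argument. For the chain of integrators, every term of $P_i(d,t)$ carries the factor $CA^{i+1}\mathrm{e}^{A(\cdot)}$, whose first $i+1$ entries vanish, so $W_{\mathrm{o}}+P(d,t)=I_n+P(d,t)$ is unit upper triangular for every $d>0$ and Theorem~\ref{sample_data} applies. One small tightening is needed in your last paragraph: the samples used reach back to $t_k-(r-1)d-h$, and $t-t_k<d$ alone only gives $t_k-(r-1)d-h>3lh-d$ when $N=r-1$; what closes it is that $h=Nd$ makes $(3l+2)h$ a sampling instant, so $t\ge(3l+2)h$ forces $t_k\ge(3l+2)h$ and hence $t_k-id-h\ge 3lh+h-(r-1)d\ge 3lh$ for every $d>0$.
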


\subsection{Systems with Nilpotent Matrices in the Presence of Unknown Constant Delays}

Consider the linear system with an unknown constant output delay
\begin{equation}
\left\{
\begin{array}
[c]{rl}%
\dot{x}\left(  t\right)   & =Ax\left(  t\right)  +Bu\left(  t\right)  ,\\
y\left(  t\right)   & =\left\{
\begin{array}
[c]{ll}%
Cx\left(  t-\tau_{0}\right)  , & t-\tau_{0}\geq0,\\
0, & t-\tau_{0}<0,
\end{array}
\right.
\end{array}
\right.  \label{sys_inte}%
\end{equation}
where $\tau_{0}$ is an unknown constant delay satisfying $\tau_{0}\in
\lbrack0,\tau_{M}]$ with $\tau_{M}$ being known.

For such a system, the delayed feedback method in \cite{zhou22tac} cannot be
used to construct a fixed-time observer in the presence of an unknown output
delay. Therefore, observer-based output feedback cannot solve the fixed-time
control problem for system (\ref{sys_inte}). In this subsection, we address
this problem using static output feedback. Before doing so, we make the
following assumption.

\begin{assumption}
\label{ass3}$A$ is nilpotent satisfying $A^{n}=0$.
\end{assumption}

\begin{theorem}
\label{the5}Consider the system (\ref{sys_inte}) under Assumptions
\ref{ass1}-\ref{ass3}. Given $h\geq(r-1)d+\tau_{M}$, choose $d>0$ such that
$W_{\mathrm{o}}+L(d)$ is invertible. Consider the smooth time-varying static
output feedback controller
\begin{equation}
u(t)=\sum_{i=0}^{r-1} g(t)K_{i}(t)\, y(t-id-h), \label{u_2}%
\end{equation}
where $K_{i}(t)\in\mathbb{R}^{m\times p}$ for $i=0,1,\ldots,r-1$ are the block
columns of $K(t)\in\mathbb{R}^{m\times n}$, as defined in (\ref{K_block}), satisfying
$K(t)=\bar{K}(t)M^{\mathrm{T}}$ with $M\in\mathbb{R}^{n\times n}$ and $\bar
K(t)$ being defined in (\ref{Mij}) and (\ref{K_hat}), respectively. Then the
closed-loop system (\ref{sys_inte}) and (\ref{u_2}) satisfies the condition
that $x(t)=0$ and $u(t)=0$, for $t\geq3nh$, meaning that the system is
fixed-time stable.
\end{theorem}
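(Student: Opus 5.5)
The plan is to repeat the cycle-by-cycle argument of Theorem \ref{the_2} with the unknown delay kept explicit. We compute the one-period transition matrix $\Phi(\tau_0)$, mapping $x(3lh)$ to $x(3(l+1)h)$. We then show that it has the closed form $\Phi(\tau_0)=\mathrm{e}^{3Ah}(I_n-\mathrm{e}^{-A\tau_0})$ and that it is nilpotent under Assumption \ref{ass3}. Hence $n$ cycles, i.e. time $3nh$, suffice.

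\textbf{Step 1 (waiting phase).} For $t\in[3lh,3lh+2h]$ we have $g(t)=0$, so $x(t)=\mathrm{e}^{A(t-3lh)}x(3lh)$.

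\textbf{Step 2 (acting phase, rewriting the measurements).} Take $t\in[3lh+2h,3(l+1)h]$. Because $h\geq(r-1)d+\tau_M$, every sampling instant $t-id-h-\tau_0$ lies in $[3lh,3lh+2h]$. In particular it is nonnegative, so the branch $y=0$ of (\ref{sys_inte}) never enters the controller. Therefore
\[
y(t-id-h)=C\mathrm{e}^{A(t-id-h-3lh)}\mathrm{e}^{-A\tau_0}x(3lh).
\]
Compared with the delay-free case of Theorem \ref{the_2}, with $w_1=w_2=0$, the only change is that $x(3lh)$ is replaced by $\mathrm{e}^{-A\tau_0}x(3lh)$. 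This holds because $\mathrm{e}^{-A\tau_0}$ commutes with every $\mathrm{e}^{A(\cdot)}$ and with every power of $A$. Repeating the computation leading to (\ref{x_dot3}) through the finite-difference identity (\ref{y_h}) then yields the following. The feedback term equals $B\bar K(t)(W_{\mathrm o}+L(d))\mathrm{e}^{A(t-h-3lh)}\mathrm{e}^{-A\tau_0}x(3lh)$, with the same matrix $L(d)$ as before, since the remainder integrals in (\ref{11}) simply carry the extra right factor $\mathrm{e}^{-A\tau_0}$.

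\textbf{Step 3 (closed form of the transition matrix).} Substituting (\ref{K_hat}) and using the definition (\ref{W_c_def}) of $W_{\mathrm c}$, the integral over $[2h,3h]$ collapses:
\[
\int_{2h}^{3h}\mathrm{e}^{-Av}B\bar K(v+3lh)(W_{\mathrm o}+L(d))\mathrm{e}^{A(v-h)}\mathrm{d}v\,\mathrm{e}^{-A\tau_0}=-\mathrm{e}^{-A\tau_0}.
\]
By the variation-of-constants formula this gives $x(3(l+1)h)=\mathrm{e}^{3Ah}(I_n-\mathrm{e}^{-A\tau_0})x(3lh)=\Phi(\tau_0)x(3lh)$. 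As a consistency check, for $\tau_0=0$ this recovers $\Delta(3(l+1)h)=0$ from Theorem \ref{the_2}.

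\textbf{Step 4 (nilpotency).} By Assumption \ref{ass3}, $I_n-\mathrm{e}^{-A\tau_0}=A\,Q(A)$, where $Q$ is a polynomial. All factors commute, so
\[
\Phi(\tau_0)^n=\mathrm{e}^{3nAh}A^nQ(A)^n=0.
\]
Thus $x(3nh)=\Phi(\tau_0)^nx(0)=0$ for every $\tau_0\in[0,\tau_M]$, even though $\tau_0$ is unknown.

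\textbf{Step 5 (staying at zero).} Once $x(3Lh)=0$ for some $L\geq n$, Step 1 gives $x\equiv0$ on $[3Lh,3Lh+2h]$. The samples used on the next acting interval come from that window, so $u\equiv0$ there as well. Induction over the cycles then gives $x(t)=0$ and $u(t)=0$ for all $t\geq3nh$.

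I expect the main obstacle to be Step 2. The point to verify carefully is that the remainder terms $R_i$ and the matrix $L(d)$ transform exactly by right multiplication with $\mathrm{e}^{-A\tau_0}$, with no $\tau_0$-dependent mismatch. This is where the commutation of $\mathrm{e}^{-A\tau_0}$ with $CA^{i+1}\mathrm{e}^{A(\cdot)}$ structure is used. It also needs the window condition $h\geq(r-1)d+\tau_M$, so that all integration intervals in (\ref{y_h}) stay inside the uncontrolled phase.
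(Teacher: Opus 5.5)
Your proposal is correct and follows essentially the same route as the paper's proof. Both rewrite the acting-phase feedback as $B\bar K(t)E(d)\mathrm{e}^{A(t-h-3lh)}\mathrm{e}^{-A\tau_0}x(3lh)$, collapse the Gramian integral to obtain $x(3(l+1)h)=\mathrm{e}^{3Ah}(I_n-\mathrm{e}^{-A\tau_0})x(3lh)$, and then use nilpotency with commutation and an induction keeping $x$ and $u$ at zero. Your explicit check that every sample time $t-id-h-\tau_0$ is nonnegative, so the $y=0$ branch never enters, is a small detail the paper leaves implicit.
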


\begin{proof}
For $t\in\lbrack3lh,2h+3lh]$, $l\in%
%TCIMACRO{\U{2124} }%
%BeginExpansion
\mathbb{Z}
%EndExpansion
_{0}^{+}$, we have $g(t)=0$. Then the closed-loop system (\ref{sys_inte}) and
(\ref{u_2}) can be represented as $\dot{x}(t)=Ax(t)$, whose solution can be
given as
\begin{equation}
x(t)=\mathrm{e}^{A(t-3lh)}x(3lh). \label{x_dout1}%
\end{equation}
From (\ref{y_norm_def}), we obtain $y(t)=y_{0}(t-\tau_{0})$. Then for
$t\in\lbrack3lh+2h,3(l+1)h]$, $l\in%
%TCIMACRO{\U{2124} }%
%BeginExpansion
\mathbb{Z}
%EndExpansion
_{0}^{+}$, using $g(t)=1$, (\ref{y_norm_def}), (\ref{y_h}), and (\ref{K_Kbar_M_relation}), we have
\begin{align}
& \dot{x}(t)= Ax(t)+B%
%TCIMACRO{\dsum _{i=0}^{r-1}}%
%BeginExpansion
{\displaystyle\sum_{i=0}^{r-1}}
%EndExpansion
K_{i}(t)y_{0}(t-\tau_{0}-id-h)\nonumber\\
=  &  Ax(t)+B%
%TCIMACRO{\dsum _{i=0}^{r-1}}%
%BeginExpansion
{\displaystyle\sum_{i=0}^{r-1}}
%EndExpansion
\bar{K}_{i}(t)CA^{i}x(t-h-\tau_{0})+B%
%TCIMACRO{\dsum _{i=0}^{r-1}}%
%BeginExpansion
{\displaystyle\sum_{i=0}^{r-1}}
%EndExpansion
R_{i}(t), \label{x_dot1}%
\end{align}
where for $i=0,1,\ldots,r-1$,
\[
R_{i}(t)=-\bar{K}_{i}(t)\int_{t-id-h-\tau_{0}}^{t-h-\tau_{0}}\varphi
_{i}(t-h-\tau_{0}-s)y_{0}^{(i+1)}(s)\mathrm{d}s.
\]
Since $t-h-\tau_{0}\in\lbrack3lh,3lh+2h]$ and $t-id-h-\tau_{0}\in
\lbrack3lh,3lh+2h]$, then from (\ref{y_norm_def}) and (\ref{x_dout1}), we have
\begin{align*}
&  \int_{t-id-h-\tau_{0}}^{t-h-\tau_{0}}\varphi_{i}(t-h-\tau_{0}%
-s)y_{0}^{(i+1)}(s)\mathrm{d}s\\
=  &  \int_{t-id-h}^{t-h}\varphi_{i}(t-h-s)y_{0}^{(i+1)}(s-\tau_{0}%
)\mathrm{d}s\\
=  &  \int_{t-id-h}^{t-h}\varphi_{i}(t-h-s)CA^{i+1}x(s-\tau_{0})\mathrm{d}s\\
=  &  \int_{t-id-h}^{t-h}\varphi_{i}(t-h-s)CA^{i+1}\mathrm{e}^{A(s-3lh)}%
\mathrm{d}s\mathrm{e}^{-A\tau_{0}}x(3lh).
\end{align*}
Therefore, (\ref{x_dot1}) can be continued as
\[
\dot{x}(t)=Ax(t)+B\bar{K}(t)E(d)\mathrm{e}^{A(t-h-3lh)}\mathrm{e}^{-A\tau_{0}%
}x(3lh),
\]
where $E(d)$ is defined in (\ref{E_def}), whose solution $x(t)$ can be calculated as
\begin{equation}
x(t)=\mathit{\Delta}(t)x(3lh), \label{x_222}%
\end{equation}
where
\begin{align*}
\mathit{\Delta}(t)=  &  \mathrm{e}^{A(t-3lh)}+\int_{2h}^{t-3lh}\mathrm{e}%
^{A(t-3lh-v)}B\\
&  \times\bar{K}(v+3lh)E(d)\mathrm{e}^{A(v-h)}\mathrm{d}v\mathrm{e}%
^{-A\tau_{0}}.
\end{align*}
Therefore, for $t=3(l+1)h$, from (\ref{K_hat}), it follows that for $l\in%
%TCIMACRO{\U{2124} }%
%BeginExpansion
\mathbb{Z}
%EndExpansion
_{0}^{+}$,
$
x(3(l+1)h)=\mathit{\Delta}(3(l+1)h)x(3lh)=\mathrm{e}^{3Ah}(I_{n}%
-\mathrm{e}^{-A\tau_{0}})x(3lh)$,
iterating which yields for $l\in\mathbb{N}$,
$x(3lh)=(\mathrm{e}^{3Ah}(I_{n}-\mathrm{e}^{-A\tau_{0}}))^{l}x(0)$.
Since $A$ is nilpotent, we have
$I_{n}-\mathrm{e}^{-A\tau_{0}}=A\tau_{0}-(A\tau_{0})^{2}/2!%
+\cdots-((-A\tau_{0})^{n-1})/(n-1)!$,
which implies that $(I_{n}-\mathrm{e}^{-A\tau_{0}})^{n}=0$. Since both $\mathrm e^{3Ah}$ and $I_n-\mathrm e^{-A\tau_0}$ are polynomials in $A$, they commute. Thus
$x(3nh) =\mathrm{e}^{3Anh}(I_{n}-\mathrm{e}^{-A\tau_{0}})^{n}x(0)=0$.

For $t\in\lbrack3nh,(3n+2)h]$, from (\ref{x_dout1}) it follows that $x(t)=0$.
For $t\in\lbrack(3n+2)h,(3n+3)h]$, from (\ref{x_222}) and (\ref{u_2}) we have
$x(t)=0$ and $u(t)=0$. Repeating the above process, for all $t\geq3nh$, we can
also get that $x(t)=0$ and $u(t)=0$.
\end{proof}

When $p=1$, we have $n=r$. Since $A$ is nilpotent, we have
$\mathrm{e}^{Au}=\sum_{i=0}^{n-1}A^{i}u^{i}/i!$. With this,
for $q=0,1,\ldots,n-2$, $L_q(d)$ in (\ref{L_h_def}) can be
simplified to
$
L_{q}(d)=-C
\sum_{i=0}^{n-2-q}
\frac{A^{i+q+1}}{i!}
\int_{-q d}^{0}\varphi_{q}\left(-u\right)u^{i}\mathrm{d}u.
$
Moreover, $L_{n-1}(d)=0$ because $A^n=0$.
Assumption \ref{ass3} together with the
canonical form (\ref{A}) implies that $A_i=0$, $i=0,1,\ldots,n-1$. In this
case, $\det(W_{\mathrm{o}}+L(d))=1$ for any $d>0$. Therefore, we can obtain
the following corollary of Theorem \ref{the5}.

\begin{corollary}
\label{cor1}Consider the system (\ref{sys_inte}) with $(A,B,C)$ defined in
(\ref{A})--(\ref{C}) under Assumptions \ref{ass1}--\ref{ass3} and $p=1$.
For any $d>0$, choose $h\geq(n-1)d+\tau_M$. Consider the smooth time-varying static
output feedback controller (\ref{u_2}). Then the corresponding closed-loop
system is fixed-time stable within a fixed time $3nh$.
\end{corollary}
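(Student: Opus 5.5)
The corollary follows from Theorem \ref{the5} once its hypotheses are checked in the special setting of the statement. The only hypothesis needing work is that $W_{\mathrm{o}}+L(d)$ is invertible for every $d>0$. Assumptions \ref{ass1}--\ref{ass3} hold by hypothesis, and $h\geq(n-1)d+\tau_M=(r-1)d+\tau_M$ since $r=n$. So I will show $\det(W_{\mathrm{o}}+L(d))=1$ for all $d>0$ and then apply Theorem \ref{the5}, which gives $x(t)=0$ and $u(t)=0$ for $t\geq 3nh$.

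\emph{Step 1: the matrices are of chain-of-integrators type.} With $p=1$, (\ref{A}) gives a companion matrix whose characteristic polynomial is $s^n-A_{n-1}s^{n-1}-\cdots-A_0$. Nilpotency (Assumption \ref{ass3}) forces this polynomial to equal $s^n$, so $A_i=0$ for all $i$ and $A$ is the upper shift matrix. From (\ref{C}), $CA^i=e_{1,i}^{\mathrm T}$ for $i=0,\ldots,n-1$ and $CA^n=0$. Hence $W_{\mathrm{o}}=I_n$.

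\emph{Step 2: $L(d)$ is strictly upper triangular.} For each $i$, the row $CA^{i+1}\mathrm{e}^{-Au}=\sum_{k\geq 0}(-u)^k CA^{i+1+k}/k!$ is a combination of $e_{1,j}^{\mathrm T}$ with $j\geq i+1$. So its first $i+1$ entries vanish identically in $u$. Integrating against $\varphi_i$ in (\ref{L_h_def}) keeps this property, so $L_i(d)$ has zeros in positions $0,\ldots,i$. In addition, $L_0(d)=0$ because it is the zero first block of (\ref{L_h_whole_def}), and $L_{n-1}(d)=0$ because $CA^n=0$. Therefore $W_{\mathrm{o}}+L(d)=I_n+L(d)$ is unit upper triangular, so $\det(W_{\mathrm{o}}+L(d))=1\neq0$ for every $d>0$.

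\emph{Step 3: apply Theorem \ref{the5}.} All its hypotheses are now verified, and the conclusion follows directly.

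The only mildly delicate point is the indexing in Step 2: one has to confirm that the zero pattern of $L_i$ lands strictly to the right of the diagonal entry. This is exactly the shift property of $A$, and the same argument already appears for $P(d,t)$ in the sampled-data corollary.
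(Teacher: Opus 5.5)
Your proof is correct and follows the paper's route: show that $\det(W_{\mathrm{o}}+L(d))=1$ for every $d>0$, then apply Theorem~\ref{the5} with $r=n$. The paper gets there by expanding $\mathrm{e}^{-Au}$ as a finite sum using nilpotency and noting $L_{n-1}(d)=0$, then simply asserting the determinant; you make the reason explicit via the strict upper triangularity of $L(d)$, mirroring the paper's own argument for $P(d,t)$.
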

Furthermore, based on Theorem \ref{the3}, we can obtain the following
simple result for chain of integrators with the single output $p=1$. Under Assumption
\ref{ass1}, $p=1$ implies that $n=r$. Moreover, under Assumption
\ref{ass3} and the canonical form (\ref{A}), we have
$A_i=0$, $i=0,1,\ldots,n-1$.
\begin{theorem}
\label{the6}Consider the chain of integrators with the single output ($p=1$) governed by system (\ref{sys_inte}) with $(A,B,C)$ in the canonical form
(\ref{A})--(\ref{C}) under Assumptions \ref{ass1}--\ref{ass3}. For any $d>0$, choose $h\geq(n-1)d+\tau_M$. Consider the smooth time-varying static output
feedback controller (\ref{output}), where $\mathcal{K}(t)=\mathcal{\bar
K}(t)\mathcal{M}^{-1}$ and
\begin{equation}
\mathcal{\bar K}(t)=-S(t)B^{\mathrm{T}}\mathrm{e}^{A^{\mathrm{T}}(3lh-t)}
W_{\mathrm{c}}\,\mathrm{e}^{-A(t-3lh-h)}, \label{K_hat11}%
\end{equation}
with $\mathcal{M}$ and $W_{\mathrm{c}}$ defined in (\ref{Mm}) and
(\ref{W_c_def}), respectively. Then the corresponding closed-loop system is fixed-time stable within a fixed time $3nh$.
\end{theorem}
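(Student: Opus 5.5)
The plan is to repeat the argument of Theorem~\ref{the5}, but with the Taylor-expansion representation used in the proof of Theorem~\ref{the3}. The key observation is that for a pure chain of integrators the Taylor remainder vanishes identically on the ``wait'' intervals. This is why no correction factor $(I_{n}+\mathcal{M}^{-1}\mathcal{L}(d))^{-1}$ is needed in (\ref{K_hat11}).

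\emph{Step 1: well-posedness for every $d>0$.} First I check that $\mathcal{M}$ in (\ref{Mm}) is invertible for every $d>0$, not only for small $d$. With $p=1$ it is a column-scaled Vandermonde matrix in the nodes $-d_{i}=-id$, $i=0,\ldots,n-1$. These nodes are pairwise distinct, so $\det\mathcal{M}\neq0$. Hence $\mathcal{K}(t)=\bar{\mathcal{K}}(t)\mathcal{M}^{-1}$ is well defined. Invertibility of $W_{\mathrm{c}}$ follows, as before, from controllability of $(A,B)$.

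\emph{Step 2: timing bookkeeping.} On $[3lh,3lh+2h]$ we have $g=0$, so $x(t)=\mathrm{e}^{A(t-3lh)}x(3lh)$. For $t\in[3lh+2h,3(l+1)h]$ and $i\le n-1$, the condition $h\geq(n-1)d+\tau_{M}$ gives
\[
s_{i}:=t-d_{i}-h-\tau_{0}\in[3lh,3lh+2h].
\]
For $l=0$ it also gives $t-d_{i}-h\ge h-(n-1)d\ge\tau_{M}\ge\tau_{0}$. Therefore the branch $y=0$ of (\ref{sys_inte}) is never used, and $y(t-d_{i}-h)=x_{0}(s_{i})$.

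\emph{Step 3: exact Taylor expansion.} I apply (\ref{x_0}) at the point $t-h-\tau_{0}$. The remainder $\mathcal{R}_{i}$ in (\ref{R_i01}) involves
\[
x_{r}(s)=e_{p,r-1}^{\mathrm{T}}(Ax(s)+Bu(s))=\bar{B}u(s),
\]
because all $A_{i}=0$. On the integration range $[s_{i},\,t-h-\tau_{0}]\subset[3lh,3lh+2h]$ the control $u$ vanishes, so $\mathcal{R}_{i}\equiv0$. Using $\mathcal{K}\mathcal{M}=\bar{\mathcal{K}}$, this yields exactly
\[
\sum_{i}\mathcal{K}_{i}(t)\,y(t-d_{i}-h)=\bar{\mathcal{K}}(t)\,x(t-h-\tau_{0})=\bar{\mathcal{K}}(t)\,\mathrm{e}^{A(t-h-3lh)}\mathrm{e}^{-A\tau_{0}}x(3lh).
\]
Hence on the active interval $\dot x=Ax+B\bar{\mathcal{K}}(t)\mathrm{e}^{A(t-h-3lh)}\mathrm{e}^{-A\tau_{0}}x(3lh)$.

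\emph{Step 4: the one-period map.} By variation of constants and the definition (\ref{W_c_def}) of $W_{\mathrm{c}}$, the same computation as in Theorem~\ref{the5} gives
\[
x(3(l+1)h)=\mathrm{e}^{3Ah}(I_{n}-\mathrm{e}^{-A\tau_{0}})\,x(3lh).
\]
Since $A$ is nilpotent, $I_{n}-\mathrm{e}^{-A\tau_{0}}$ is a polynomial in $A$ with no constant term, so $(I_{n}-\mathrm{e}^{-A\tau_{0}})^{n}=0$. Both factors commute, being polynomials in $A$. Iterating $n$ times gives $x(3nh)=0$. Propagating forward through the subsequent wait and active intervals, as at the end of the proof of Theorem~\ref{the5}, gives $x(t)=0$ and $u(t)=0$ for all $t\ge3nh$.

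The main point needing care is Step 3: one must verify that the entire remainder window, including the unknown shift $\tau_{0}$, lies inside the wait interval. This is exactly what the inequality $h\ge(n-1)d+\tau_{M}$ secures. No smallness of $d$ is needed anywhere, which is why the result holds for arbitrary $d>0$.
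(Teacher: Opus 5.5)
Your proposal is correct and follows essentially the same route as the paper. The paper also applies the Taylor expansion (\ref{x_0}) at $t-h-\tau_0$ and shows the remainder vanishes because $e_{1,n-1}^{\mathrm T}A=0$ and $u=0$ on the wait window, obtains the one-period map $\mathrm{e}^{3Ah}(I_n-\mathrm{e}^{-A\tau_0})$, and concludes by nilpotency as in Theorem~\ref{the5}. Your Vandermonde argument for the invertibility of $\mathcal{M}$ for every $d>0$ and your check that the $y=0$ branch of (\ref{sys_inte}) is never triggered are minor details that the paper leaves implicit.
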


\begin{proof}
The closed-loop system consisting of (\ref{sys_inte}) and (\ref{output}) can
be written as
\begin{equation}
\dot{x}(t)=Ax(t)+%
%TCIMACRO{\dsum _{i=0}^{n-1}}%
%BeginExpansion
{\displaystyle\sum_{i=0}^{n-1}}
%EndExpansion
Bg(t)\mathcal{K}_{i}(t)x_{0}(t-d_{i}-h-\tau_{0}) . \label{x_dot_clo_1}%
\end{equation}
According to (\ref{x_0}), we obtain
\begin{align*}
&
%TCIMACRO{\dsum _{i=0}^{n-1}}%
%BeginExpansion
{\displaystyle\sum_{i=0}^{n-1}}
%EndExpansion
\mathcal{K}_{i}(t)x_{0}(t-d_{i}-h-\tau_{0})\\
&  =%
%TCIMACRO{\dsum _{j=0}^{n-1}}%
%BeginExpansion
{\displaystyle\sum_{j=0}^{n-1}}
%EndExpansion
\mathcal{\bar{K}}_{j}(t)x_{j}(t-h-\tau_{0})+%
%TCIMACRO{\dsum _{i=0}^{n-1}}%
%BeginExpansion
{\displaystyle\sum_{i=0}^{n-1}}
%EndExpansion
\mathcal{K}_{i}(t)\mathcal{R}_{i}(t-h-\tau_{0}),
\end{align*}
where $\mathcal{R}_{i}(t)$ is defined in (\ref{R_i01}), substituting which into (\ref{x_dot_clo_1}) yields
\begin{align}
\dot{x}(t)  =&Ax(t)+Bg(t)\mathcal{\bar{K}}(t)x(t-h-\tau_{0})\nonumber\\
& +\sum_{i=0}^{n-1}Bg(t)\mathcal{K}_{i}(t)\mathcal{R}_{i}(t-h-\tau
_{0}).\label{x_dot2_1}%
\end{align}

For $t\in\lbrack3lh,2h+3lh]$, $l\in%
%TCIMACRO{\U{2124} }%
%BeginExpansion
\mathbb{Z}
%EndExpansion
_{0}^{+}$, in view of $g(t)=0$, (\ref{x_dot2_1}) reduces to $\dot
{x}(t)=Ax(t)$, whose solution can be expressed as
\begin{equation}
x(t)=\mathrm{e}^{A(t-3lh)}x(3lh). \label{x_123}%
\end{equation}
For $t\in\lbrack2h+3lh,3(l+1)h]$, $l\in%
%TCIMACRO{\U{2124} }%
%BeginExpansion
\mathbb{Z}
%EndExpansion
_{0}^{+}$, by noting $g(t)=1$ and (\ref{x_123}), (\ref{x_dot2_1}) can be
rewritten as
\begin{align}
\dot{x}(t)=  &  Ax(t)+B\mathcal{\bar{K}}(t)\mathrm{e}^{A(t-3lh-h-\tau_{0}%
)}x(3lh)\nonumber\\
&  +%
%TCIMACRO{\dsum _{i=0}^{n-1}}%
%BeginExpansion
{\displaystyle\sum_{i=0}^{n-1}}
%EndExpansion
B\mathcal{K}_{i}(t)\mathcal{R}_{i}(t-h-\tau_{0}). \label{x_t_cl2_1}%
\end{align}
Since $t-h-\tau_{0}\in\lbrack3lh,2h+3lh]$ and $t-d_{i}-h-\tau_{0}\in
\lbrack3lh,2h+3lh]$ for $d_{i}+\tau_{0}\leq h$, $x_{n}(s)$ in $\mathcal{R}%
_{i}(t-h-\tau_{0})$ can be calculated as
$x_{n}(t)  =\dot{x}_{n-1}(t)=e_{1,n-1}^{\mathrm{T}}\dot{x}(t)
 =e_{1,n-1}^{\mathrm{T}}Ax(t)=e_{1,n-1}^{\mathrm{T}}A\mathrm{e}%
^{A(t-3lh)}x(3lh)$.
For $p=1$, according to Assumption \ref{ass3} with the matrix $A$ in the canonical form (\ref{A}), we obtain $A_{i}=0$,
$i=0,1,\ldots,n-1$ and thus $e_{1,n-1}^{\mathrm{T}}A\mathrm{e}^{A(t-3lh)}%
=0_{1\times n}$. With this, (\ref{x_t_cl2_1}) can be continued as
$\dot{x}(t)=Ax(t)+B\mathcal{\bar{K}}(t)\mathrm{e}^{A(t-h-\tau_{0})}%
\mathrm{e}^{-3lhA}x(3lh)$,
whose solution can be expressed as
\begin{align*}
x(t)=  &  \mathrm{e}^{A(t-3lh)}x(3lh)\\
&  +\int_{2h+3lh}^{t}\mathrm{e}^{A(t-s)}B\mathcal{\bar{K}}(s)\mathrm{e}%
^{A(s-h-\tau_{0})}\mathrm{d}s\mathrm{e}^{-3lhA}x(3lh).
\end{align*}
Then for $t=3(l+1)h$, in view of (\ref{K_hat11}), we have
$x(3(l+1)h)=\mathrm{e}^{3Ah}(I_{n}-\mathrm{e}^{-A\tau_{0}})x(3lh),l\in%
%TCIMACRO{\U{2124} }%
%BeginExpansion
\mathbb{Z}
%EndExpansion
_{0}^{+}$. Similar to the proof of Theorem \ref{the5}, we obtain that $x(t)=0$
and $u(t)=0$, for $t\geq3nh$.
\end{proof}

\begin{remark}
Consider the chain of integrators with the single output ($p=1$) governed by system (\ref{sys_inte}) with $(A,B,C)$ in the canonical form
(\ref{A})--(\ref{C}) under Assumptions \ref{ass1}--\ref{ass3}, the controller (\ref{K_hat11}) in Theorem
\ref{the6} is simpler than the controller (\ref{u_2}) in Corollary
\ref{cor1}, since there is no need to calculate $\mathcal{L}(d)$
defined in (\ref{calL_def}). Moreover, for any $d>0$, one can choose
$h\geq(n-1)d+\tau_M$, whereas
\cite{emilia17auto,anton18auto} consider systems without measurement
delays and require the artificial delay $d$ to be sufficiently small.

We show next that the controller gain can be made arbitrarily small by increasing $h$, thereby potentially improving the attenuation of measurement noise, and examine the dependence of the gain on $h$. Since
$d_i=id$, $i=0,1,\ldots,n-1$, the matrix $\mathcal{M}$ defined in
(\ref{Mm}) can be factorized as
$\mathcal{M}
=
\mathcal{M}_0
\operatorname{diag}
(1,d,\ldots,d^{n-1})$,
where $\mathcal{M}_0$ is independent of $d$. Hence,
$\mathcal{M}^{-1}
=
\operatorname{diag}
(1,d^{-1},\ldots,d^{-(n-1)})
\mathcal{M}_0^{-1}$,
which implies that $\vert\mathcal{M}^{-1}\vert$ remains bounded as
$d\rightarrow\infty$.
In addition to the conditions in Definition~\ref{def1}, for each \(h>0\), we choose \(S(s)=\bar{S}(s/h)\), \(s\in[2h,3h]\), where \(\bar{S}\) is independent of \(h\), nonnegative on \([2,3]\), and strictly positive at some point in \((2,3)\). Then, \(\sup_{h>0}|S|<\infty\).

For the chain of $n$ integrators, by using the change of
variables $s=h\theta$, $W_{\mathrm{c}}^{-1}$ defined in (\ref{W_c_def}) can be rewritten as $W_{\mathrm{c}}^{-1}=
hD_hG_{\mathrm{c}}D_h$,
where $D_h=\operatorname{diag}
(h^{n-1},h^{n-2},\ldots,h,1)$,
$
\phi(\theta)
=
\left[
(-\theta)^{n-1}/(n-1)!,
(-\theta)^{n-2}/(n-2)!,
\ldots,-\theta,1
\right]^{\mathrm T},
$
and
$G_{\mathrm{c}}=
\int_2^3
\phi(\theta)\bar{S}(\theta)
\phi^{\mathrm T}(\theta)\,\mathrm{d}\theta$.
Since $\bar S$ is continuous and strictly positive on a nonempty
subinterval of $(2,3)$, we have $G_{\mathrm{c}}>0$. Consequently,
$
W_{\mathrm{c}}
=
\frac{1}{h}
D_h^{-1}G_{\mathrm{c}}^{-1}D_h^{-1}$.
It follows that $\vert W_{\mathrm{c}}\vert
= O(h^{-1})$ as $h\rightarrow\infty$,
whereas $\vert W_{\mathrm{c}}\vert
=
O\bigl(h^{-(2n-1)}\bigr)$ as $h\rightarrow0^{+}$.
Let $\theta=(t-3lh)/h\in[2,3]$. Then $
B^{\mathrm T}\mathrm{e}^{A^{\mathrm T}(3lh-t)}
=
\phi^{\mathrm T}(\theta)D_h
$
and
$
\mathrm{e}^{-A(t-3lh-h)}
=
D_h\mathrm{e}^{-A(\theta-1)}D_h^{-1}$.
Substituting the above expressions into (\ref{K_hat11}) yields
$
\bar{\mathcal K}(t)
=
-\frac{S(t)}{h}
\phi^{\mathrm T}(\theta)
G_{\mathrm{c}}^{-1}
\mathrm{e}^{-A(\theta-1)}
D_h^{-1}.
$
Therefore, $\vert\bar{\mathcal K}(t)\vert
=
O(h^{-1})$ as $h\rightarrow\infty$,
whereas $\vert\bar{\mathcal K}(t)\vert
=
O(h^{-n})$ as $h\rightarrow0^{+}$.
Since $\mathcal K(t)
=
\bar{\mathcal K}(t)\mathcal{M}^{-1}$
and $\vert\mathcal{M}^{-1}\vert$ remains bounded as
$d\rightarrow\infty$, it follows that
$
\vert\mathcal K(t)\vert
\leq
\vert\bar{\mathcal K}(t)\vert
\vert\mathcal{M}^{-1}\vert
=O(h^{-1})$,
as $h\rightarrow\infty$.
Thus, by choosing $d$ arbitrarily large and increasing $h$
accordingly so that $h\geq(n-1)d+\tau_M$, the actual controller
gains can be made arbitrarily small. This endows the controller with
a low-gain property that may efficiently attenuate measurement noise.
\end{remark}
\section{\label{sec4}Numerical Examples}

\subsection{Chain of Two Integrators under Matched Disturbances}
\begin{figure}[t]
\centering \includegraphics[scale=0.5]{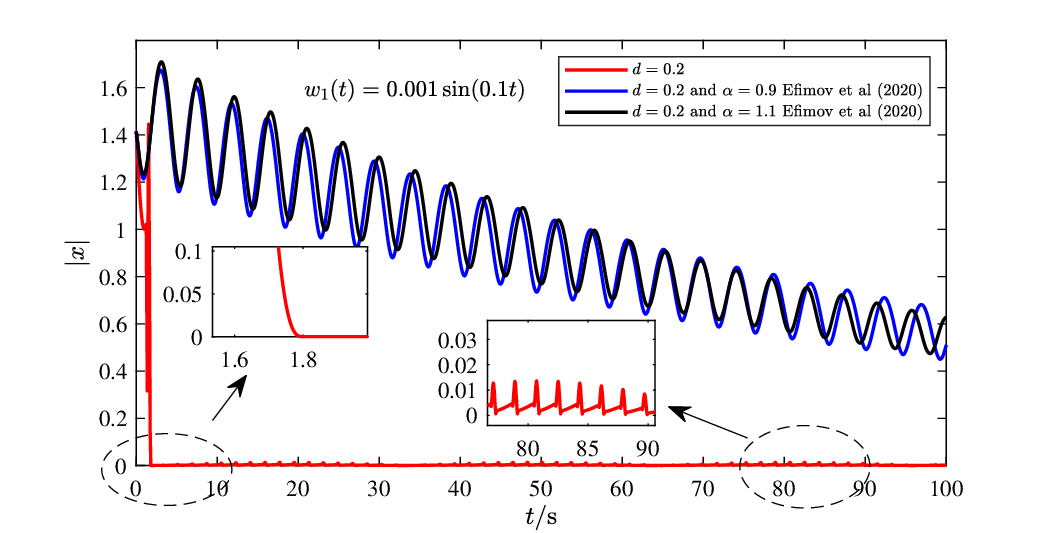}\caption{State trajectories of
the chain of two integrators}%
\label{figure0}%
\end{figure}
Here we compare our results with those in \cite{Efimov20auto}
(accelerated stabilization) for the chain of two integrators. From
(\ref{L_h_def}), we can calculate $\det(W_{\mathrm{o}}+L(d))=1$, which implies
that for any $d>0$, $W_{\mathrm{o}}+L(d)$ is invertible. In contrast, in
\cite{Efimov20auto}, the difference step size $d$ must satisfy $d\in
(5\times10^{-4},0.2]$. According to \cite{Efimov20auto}, the
matched disturbances are chosen as $w_{1}(t)=0.001\sin(0.1t)$, $w_{2}(t)=0$, and
$d=0.2$. For simulation, take $x(0)=[1,-1]^{\mathrm{T}}$. Take $S(t)=\sin
^{2}(\pi t/h)$ and $h=3d\geq 2d$. We conducted simulations using Theorem \ref{the_2}
and Theorem \ref{the3}, respectively. It was found that the two methods yield
exactly the same performance under the same $d$. The state trajectories are
plotted in Figure \ref{figure0}, where two different values of the controller
parameter $\alpha$ from \cite{Efimov20auto} are selected.
Simulation results show that our method has a faster convergence rate and our
controller leads to an ultimate bound of $0.02$ over the final time interval
$[1.8,100]$, meaning prescribed-time ISS is achieved with $T=3h=1.8$.

\subsection{Chain of Three Integrators}

\subsubsection{Unknown Constant Output Delays}

Consider the chain of three integrators under unknown constant output delays.
From (\ref{L_h_def}), we can calculate $\det(W_{\mathrm{o}}+L(d))=1$, which
implies that for any $d>0$, $W_{\mathrm{o}}+L(d)$ is invertible. For
simulation, similar to \cite{anton18auto}, take $x(0)=[1,-1,1]^{\mathrm{T}}$.
Take $S(t)=\sin^{2}(\pi t/h)$, $d=0.2$, and $h=3d$. We conducted simulations using
Theorem \ref{the5} and Theorem \ref{the6}, respectively. It was found that the
two methods yield exactly the same performance under the same $d$ and
$\tau_{0}$. The state trajectories are plotted in Figure \ref{figure1}.
In the simulations, we take $\tau_M=d$ and $\tau_0=d$.
Simulation results show that, for the all considered unknown output delays
$\tau_0\in[0,\tau_M]$ with $\tau_M\leq d$, the state converges
to zero no later than the fixed time $T=3nh=27d$.

\begin{figure}[t]
\centering \includegraphics[scale=0.5]{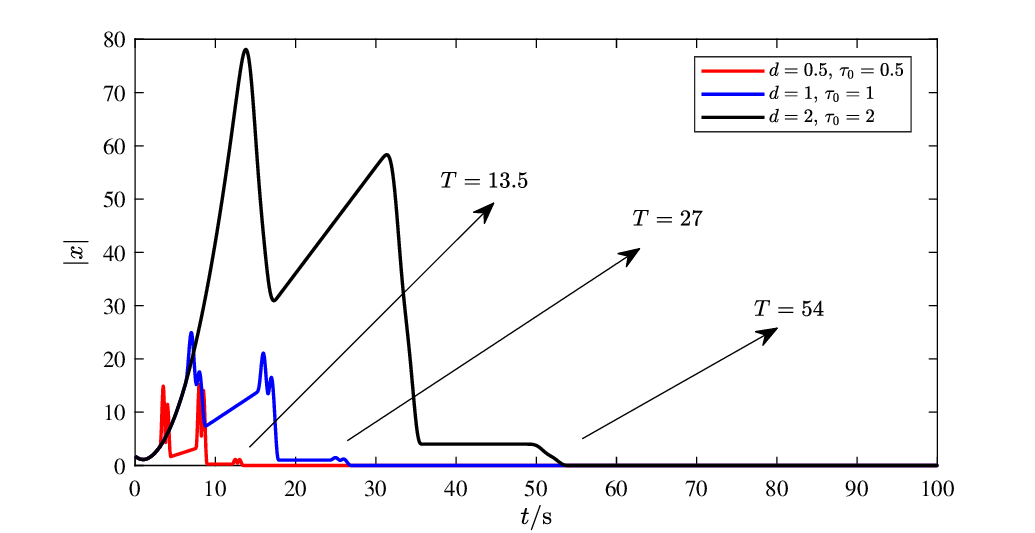}\caption{State trajectories of
the chain of three integrators with unknown constant output delays}%
\label{figure1}%
\end{figure}

\subsubsection{Comparison with Existing Results}

For systems of integrators without measurement delay, the difference step size
$d$ chosen to stabilize the system in the traditional static output feedback
methods \cite{emilia17auto} and \cite{anton18auto} is not arbitrary; it is
required to lie within a relatively small range, for example, $d\in(0,0.85]$
in \cite{emilia17auto} and $d\in(0,2.529]$ in \cite{anton18auto}. In contrast,
the method proposed in this paper does not have this limitation; any $d>0$ is
admissible, making our method more widely applicable.

\noindent\textbf{Nominal-system comparison.} In the following, we perform two comparisons in simulation: the first is with
asymptotic or exponential convergence, i.e., \cite{emilia17auto,anton18auto}
and \cite{nicule04tac}; the second is with practical fixed-time convergence,
i.e., \cite{Nekhoroshikh22auto}. Take $S(t)=\sin^{2}(\pi t/h)$ and $h=3d$. For
comparison with \cite{emilia17auto,nicule04tac,anton18auto}, similar to
\cite{anton18auto}, take $x(0)=[1,-1,1]^{\mathrm{T}}$. The state trajectories
are plotted in Figure \ref{figure4}. For comparison with
\cite{Nekhoroshikh22auto}, similar to \cite{Nekhoroshikh22auto}, take
$x(0)=[0,1,-0.5]^{\mathrm{T}}\times10^{8-2i}, \quad i=0,1,2,3$. The state
trajectories are plotted in Figures \ref{figure6} and \ref{figure5}. The above
simulation results show that for any $d>0$, the state converges to zero at the
prescribed-time $T=3h=9d$. Furthermore, our method achieves a faster convergence
rate compared to those in \cite{emilia17auto,nicule04tac,anton18auto} and
\cite{Nekhoroshikh22auto}. \begin{figure}[t]
\centering \includegraphics[scale=0.5]{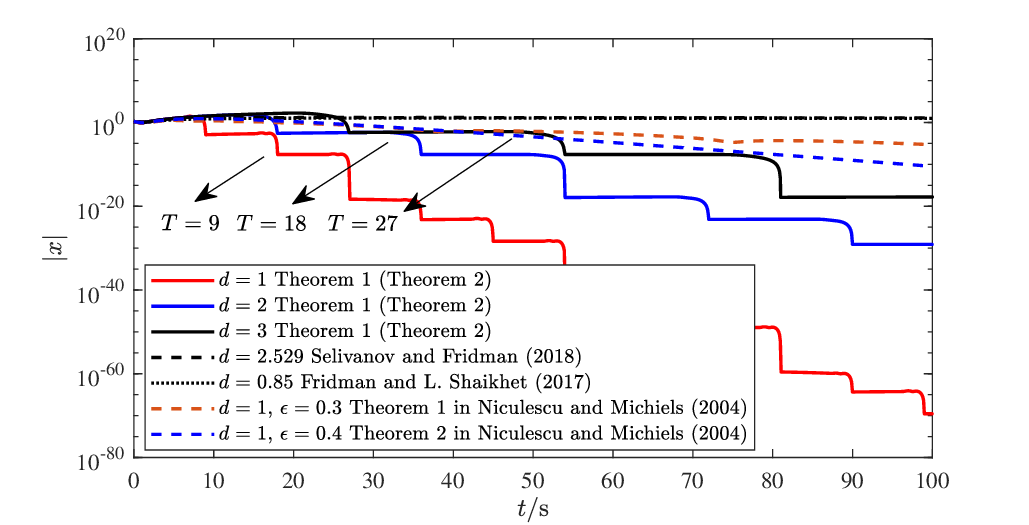}\caption{State trajectories of
the chain of three integrators without measurement delay}%
\label{figure4}%
\end{figure}\begin{figure}[t]
\centering \includegraphics[scale=0.5]{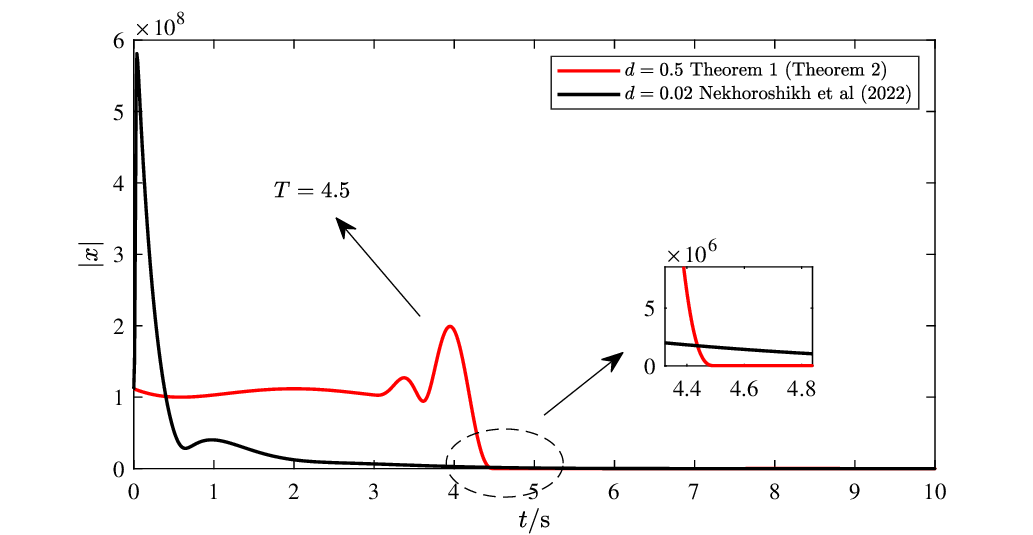}\caption{State trajectories of
the chain of three integrators under $x(0)=[0,1,-0.5]^{\mathrm{T}}\times
10^{8}$}%
\label{figure6}%
\end{figure}
\begin{figure}[t]
\centering \includegraphics[scale=0.5]{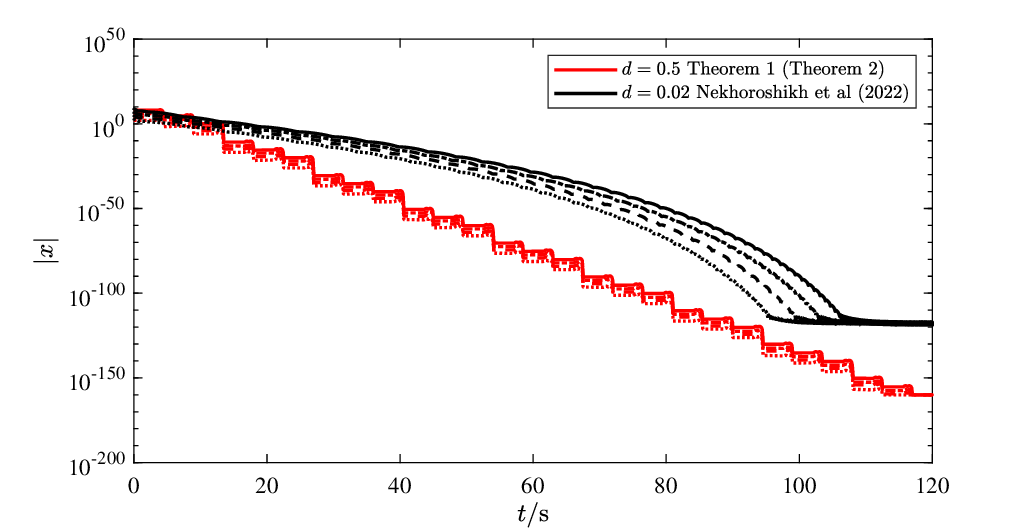}\caption{State trajectories of
the chain of three integrators under $x(0)=[0,1,-0.5]^{\mathrm{T}}%
\times10^{8-2i}, i=0,1,2,3$}%
\label{figure5}%
\end{figure}

\noindent\textbf{Comparison under measurement noise and unknown constant measurement delay.}
As pointed out in Remark~\ref{rem1}, a smaller $d$ leads to high-gain
characteristics of the feedback gain, thereby amplifying the measurement noise
of the system. Here, we compare our result with that in \cite{anton18auto} and
choose the largest allowable value of $d$ for the method in \cite{anton18auto},
i.e., $d=2.529$. For simulation, we take the initial condition
$x(0)=[1,-1,1]^{\mathrm{T}}$ and choose the measurement noise as
$w_2(t)=0.1\sin(10t)$. Moreover, according to our simulation tests, the method
in \cite{anton18auto} can tolerate only an unknown constant measurement delay
up to $\tau_0=0.5$, whereas our method allows an arbitrarily large $d>0$, provided that $h\geq(n-1)d+\tau_M$. In this simulation, we take $d=5$, $\tau_M=5$, and $h=3d=15$,
under which our method can tolerate the unknown constant measurement delay
$\tau_0=5\in[0,\tau_M]$. The simulation results are shown in Figure~\ref{figure9}, which
indicate that the proposed controller can tolerate a larger unknown constant
measurement delay and achieve a smaller steady-state error in the presence of
measurement noise.
\begin{figure}[t]
\centering \includegraphics[scale=0.5]{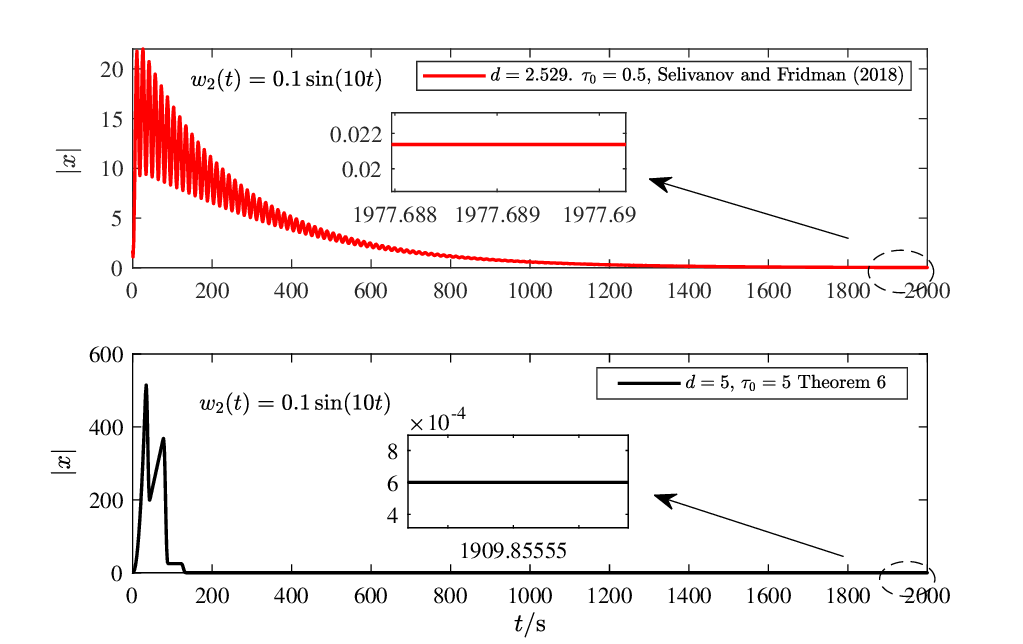}\caption{State trajectories of
the chain of three integrators with measurement noise and unknown constant time delays}%
\label{figure9}%
\end{figure}

\subsection{Spacecraft Rendezvous Control System}

We consider the linearized spacecraft rendezvous control system, namely, C-W
equation \cite{zhou17tcst,zhou22tac}
\begin{equation}
\left[
\begin{array}
[c]{c}%
\ddot{X}\\
\ddot{Z}\\
\ddot{Y}%
\end{array}
\right]  =\left[
\begin{array}
[c]{c}%
2\omega_{\mathrm{o}}\dot{Z}\\
3\omega_{\mathrm{o}}^{2}Z-2\omega_{\mathrm{o}}\dot{X}\\
-\omega_{\mathrm{o}}^{2}Y
\end{array}
\right]  +\left[
\begin{array}
[c]{c}%
a_{X}\\
a_{Z}\\
a_{Y}%
\end{array}
\right]  , \label{sys11}%
\end{equation}
where $\omega_{\mathrm{o}}=7.2722\times10^{-5}\mathrm{rad}/\mathrm{s}$ and
$a=[a_{X},a_{Z},a_{Y}]^{\mathrm{T}}$ is the acceleration vector due to thrust
forces on the chaser. From (\ref{sys11}), it can be observed that the $X$-$Z$
dynamics and the $Y$ dynamics are decoupled. Thus, they can be designed separately.
Here we consider three cases: no measurement delay (Theorem \ref{the_2}), slowly time-varying delay (Theorem \ref{the4}), and periodic sampled-data measurements (Theorem \ref{sample_data}). Here we take slowly time-varying delay $\tau(t)=\sin^2(0.1 t)$.
\subsubsection{$X$-$Z$ dynamics}

Since the value of $\omega_{\mathrm{o}}$ is very small, to avoid being
overwhelmed by large numbers and similar to \cite{zhou17tcst}, we define a new
state $x=[X,\frac{\dot{X}}{\omega_{\mathrm{o}}},Z,\frac{\dot{Z}}%
{\omega_{\mathrm{o}}}]^{\mathrm{T}}$. Then the $X$-$Z$ dynamics in
(\ref{sys11}) can be rewritten as (\ref{sys_2}) with
\[ A=\omega_{\mathrm{o}}\left[ \begin{array} [c]{cccc}%
0 & 1 & 0 & 0\\ 0 & 0 & 0 & 2\\ 0 & 0 & 0 & 1\\ 0 & -2 & 3 & 0 \end{array} \right] , B=\frac{1}{\omega_{\mathrm{o}}}\left[ \begin{array} [c]{cc}%
0 & 0\\ 1 & 0\\ 0 & 0\\ 0 & 1 \end{array} \right] , C= \left[ \begin{array} [c]{cc}%
1 & 0\\ 0 & 0\\ 0 & 1\\ 0 & 0 \end{array} \right] ^{\mathrm{T}}.
\]

We can calculate that $r=2$ and $W_{\mathrm{o}}$ is invertible, namely,
Assumption \ref{ass2} is satisfied. For Theorems \ref{the_2} and \ref{sample_data}, direct calculation from
(\ref{L_h_def}) and (\ref{P_def}) gives
$\det(W_{\mathrm{o}}+L(d))
=\det(W_{\mathrm{o}}+P(d,t))
=(3\omega_{\mathrm{o}}d\sin(\omega_{\mathrm{o}}d)
+8\cos(\omega_{\mathrm{o}}d)-8)/d^{2}$.
For Theorem \ref{the4},
$\det(\mathcal W_{\mathrm{o}}(d,t))
=(3\omega_{\mathrm{o}}\Delta(d,t)\sin(\omega_{\mathrm{o}}\Delta(d,t))
+8\cos(\omega_{\mathrm{o}}\Delta(d,t))-8)/d^{2}$,
where $\Delta(d,t)=d+\tau(t-d-h)-\tau(t-h)$.
The matrices $W_{\mathrm{o}}+L(d)$ and $W_{\mathrm{o}}+P(d,t)$ are invertible for all $d>0$ except at isolated singular values, with the first positive one given by $d=2\pi/\omega_{\mathrm{o}}\approx86400$. For $\mathcal W_{\mathrm{o}}(d,t)$, the first positive singularity occurs when
$\Delta(d,t)=2\pi/\omega_{\mathrm{o}}$. For $\tau(t)=\sin^{2}(0.1t)$, the first uniformly nonsingular interval is approximately $0<d<86399.52$, obtained from $d+|\sin(0.1d)|<2\pi/\omega_{\mathrm{o}}$. Physically, at these singular values the oscillatory modes are sampled one orbital period apart and return to the same orbital phase, making certain velocity-related state directions indistinguishable from the available outputs. Thus, for $X$-$Z$ dynamics, the three cases do not require sufficiently small $d$; it is only necessary to avoid the corresponding isolated singular values. The controllers $a_X$ and $a_Z$ are then designed according to Theorems \ref{the_2}, \ref{the4}, and \ref{sample_data}.
\subsubsection{$Y$ dynamics}

The $Y$ dynamics in (\ref{sys11}) can be rewritten as (\ref{sys_2}) with
$x=[Y,\dot{Y}]^{\mathrm{T}}$,
$A={\scriptstyle\left[\begin{smallmatrix}0&1\\-\omega_{\mathrm{o}}^{2}&0\end{smallmatrix}\right]}$,
$B=[0,1]^{\mathrm{T}}$, and $C=[1,0]$. We have $r=2$ and
$W_{\mathrm{o}}=I_{2}$, so Assumption \ref{ass2} holds. From
(\ref{L_h_def}) and (\ref{P_def}),
$\det(W_{\mathrm{o}}+L(d))
=\det(W_{\mathrm{o}}+P(d,t))
=\sin(\omega_{\mathrm{o}}d)/(\omega_{\mathrm{o}}d)$.
For Theorem \ref{the4},
$\det(\mathcal W_{\mathrm{o}}(d,t))
=\sin(\omega_{\mathrm{o}}\Delta(d,t))/(\omega_{\mathrm{o}}d)$,
where $\Delta(d,t)=d+\tau(t-d-h)-\tau(t-h)$. Hence, the first positive
singularity of the former two matrices is
$d=\pi/\omega_{\mathrm{o}}\approx43200$, whereas, for
$\tau(t)=\sin^{2}(0.1t)$, the first interval of $d$ over which
$\mathcal W_{\mathrm{o}}(d,t)$ remains nonsingular for all $t\geq2h$ is
approximately $0<d<43199.75$, determined
by $d+|\sin(0.1d)|<\pi/\omega_{\mathrm{o}}$. At the singular values, the
oscillatory mode is sampled half an orbital period apart, so that
$C\mathrm{e}^{-Ad}=(-1)^{k}C$ and the two output samples contain linearly
dependent state information. Thus, sufficiently small $d$ is unnecessary;
one only needs to avoid the corresponding isolated singularities. The
controller $a_Y$ is then designed according to Theorems \ref{the_2},
\ref{the4}, and \ref{sample_data}, respectively.

Similar to \cite{zhou22tac}, take
$\chi(0)=[X(0),Y(0),Z(0),\dot X(0),\dot Y(0),$ $\dot Z(0)]
=[-1000,1000,1000,2,-2,2]$,
$S(t)=\sin^{2}(\pi t/h)$, and $h=3d$. Consider the noisy measurement
$y(t)=[X(t),Z(t),Y(t)]^{\mathrm T}+w(t)$, where
$w(t)=10^{j}v(t)$, $j=1,2,3$,
$v(t)=0.01(\mathrm{rnd}(3)+\sin(\omega t))$,
$\omega=[11,12,13]^{\mathrm T}$, and $\mathrm{rnd}(3)$ denotes a
three-dimensional vector whose entries are independently and uniformly
distributed over $[0,1]$, as in \cite{zimenko25auto}. For $d=50,60,70$, the corresponding convergence times are
$T=3h=9d=450,540,630$, respectively. For all three values of $d$, the choice $h=3d$ satisfies the
corresponding delay conditions in Theorems~\ref{the_2}, \ref{the4}, and \ref{sample_data}. Figures~\ref{figure3} and
\ref{figure10} show that, in the presence of measurement noise,
all three controllers drive the state into small ultimate
neighborhoods of the origin after the corresponding convergence
times. In particular, under $w(t)=100v(t)$ and $1000v(t)$,
Theorem~\ref{the_2} yields bounds of about $0.8$ and $8.8$, respectively,
while under $w(t)=10v(t)$ and $100v(t)$, Theorem~\ref{sample_data} yields bounds
of about $2.5$ and $25$, respectively. Figure~\ref{figure10} further shows
that the controller of Theorem~\ref{the4} achieves smaller bounds of about
$0.1$ and $0.8$ under
$w(t)=10v(t)$ and $100v(t)$. These results indicate that the proposed delayed
output-feedback controllers exhibit robustness against measurement noise
and can partially attenuate its oscillatory component without an explicit
filter.
%\begin{figure}[t]
%\centering \includegraphics[scale=0.5]{figure_2}\caption{State trajectories of
%the spacecraft rendezvous control system without measurement noises}%
%\label{figure2}%
%\end{figure}

\begin{figure}[t]
\centering \includegraphics[scale=0.5]{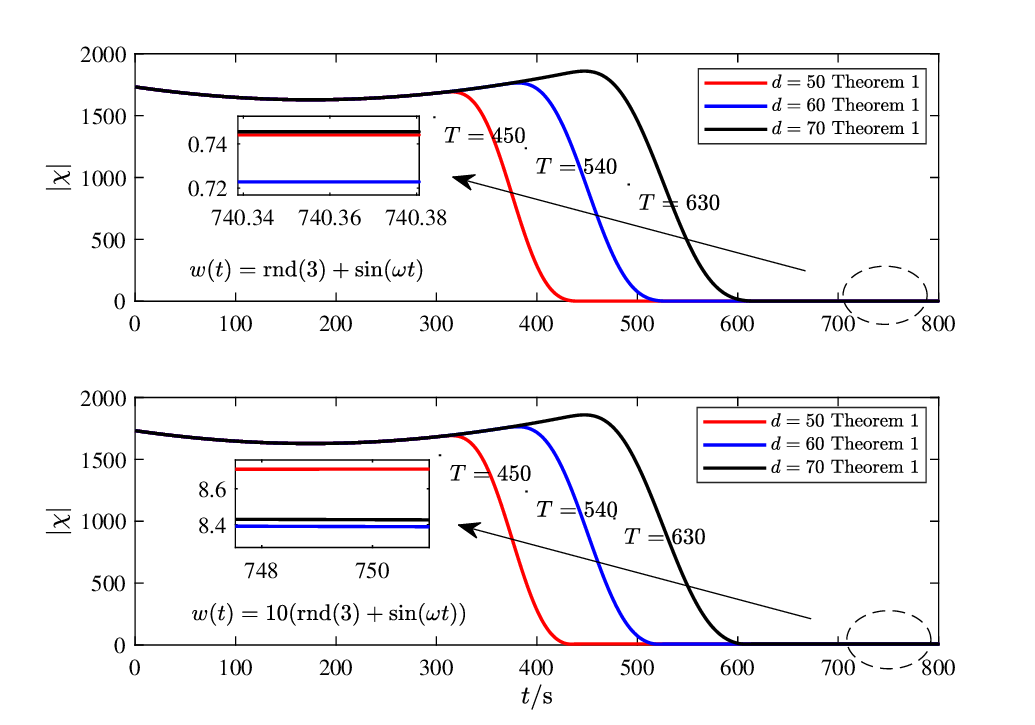}\caption{State trajectories of
the spacecraft rendezvous control system under measurement noises and without delays}%
\label{figure3}%
\end{figure}
\begin{figure}[t]
\centering \includegraphics[scale=0.5]{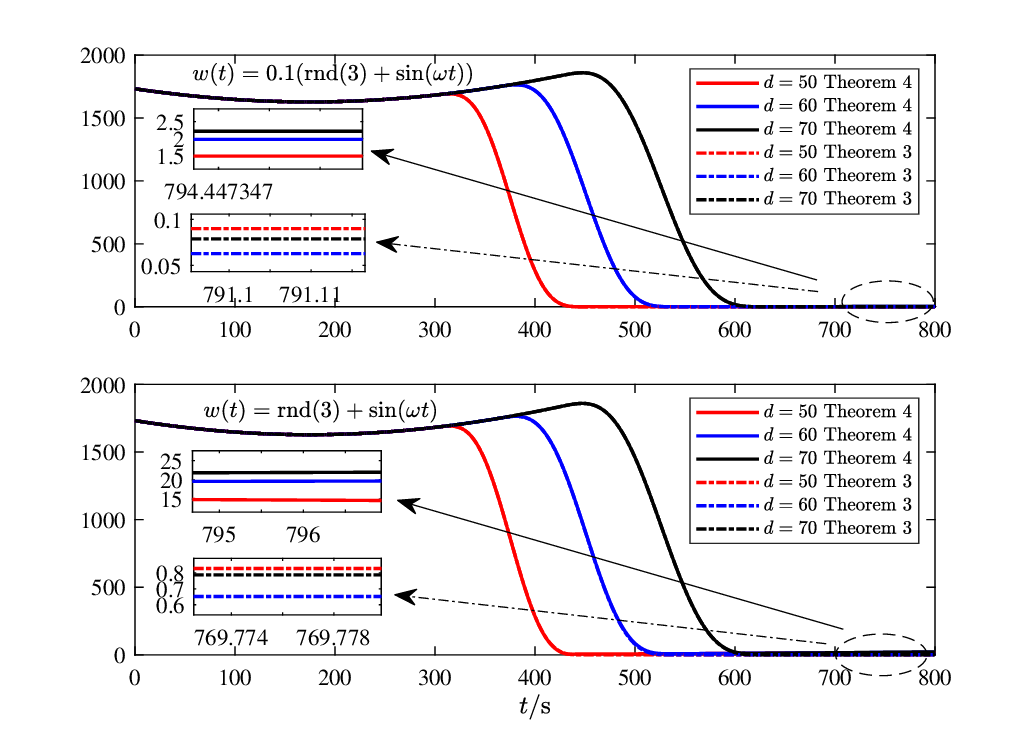}\caption{State trajectories of
the spacecraft rendezvous control system under measurement noises with delay $\tau(t)=\sin^2(0.1t)$ (Theorem \ref{the4}) and sampled-data measurements $t_{k}=kd$ (Theorem \ref{sample_data})}%
\label{figure10}%
\end{figure}
\section{\label{sec5}Conclusion}

This paper introduced two artificial delays with distinct roles. The first
realizes static output feedback using delayed output measurements, while the
second achieves prescribed/fixed-time stabilization. Furthermore, we
solve the fixed-time static output-feedback control problem in the presence
of output measurement delays. For systems without output measurement delays,
a time-invariant control gain may also be designed using the optimization
approach in \cite{wim20scl}; however, the existence of a feasible solution is
not guaranteed. An extension to fast-varying input and output delays (without any constraints on the delay derivatives) may be a topic for future research.

\end{document}